\DeclareMathOperator{\reg}{reg}
\newcommand{\PSL}{\mathrm{PSL}}
\newcommand{\SL}{\mathrm{SL}}
\newcommand{\N}{\mathbb{N}}
\newcommand{\Z}{\mathbb{Z}}
\newcommand{\R}{\mathbb{R}}
\newcommand{\C}{\mathbb{C}}
\newcommand{\Q}{\mathbb{Q}}
\renewcommand{\H}{\mathbb{H}}
\newcommand{\tr}{\mathrm{tr}}
\DeclareMathOperator{\imag}{Im}
\DeclareMathOperator{\calQ}{\mathcal{Q}}
\DeclareMathOperator{\sgn}{sgn}
\renewcommand{\pmod}[1]{\  \,  \left( \mathrm{mod} \,  #1 \right)}
\newcommand{\KM}{\mathrm{KM}}
\renewcommand{\S}{\mathrm{S}}
\DeclareMathOperator{\e}{\mathfrak{e}}
\numberwithin{equation}{section}
	\newtheorem{Satz}{Satz}[section]
	\newtheorem{theorem}[Satz]{Theorem}
	\newtheorem{lemma}[Satz]{Lemma}
	\newtheorem{proposition}[Satz]{Proposition} 
	\newtheorem{corollary}[Satz]{Corollary}
	\theoremstyle{definition} 
	\newtheorem{example}[Satz]{Example}
	\newtheorem{remark}[Satz]{Remark}
\date{\today}
 \author{Claudia Alfes-Neumann}
\address{Mathematical Institute, Paderborn University, Warburger Str. 100,
D-33098 Paderborn, Germany}
\email{alfes@math.uni-paderborn.de}
\author{Markus Schwagenscheidt}
\address{Mathematical Institute, University of Cologne, Weyertal 86-90, D--50931 Cologne, Germany}
\email{mschwage@math.uni-koeln.de}
\title[]{Traces of reciprocal singular moduli}
\thanks{The research of the second author is supported by the SFB-TRR 191 \lq Symplectic Structures in Geometry, Algebra and Dynamics\rq, funded by the DFG. We thank Paloma Bengoechea, Stephan Ehlen, and Michalis Neururer for helpful discussions. Further, we thank the anonymous referee for useful remarks which helped to improve the results of this paper.}
\begin{document}

\begin{abstract}
	We show that the generating series of traces of reciprocal singular moduli is a mixed mock modular form of weight $3/2$ whose shadow is given by a linear combination of products of unary and binary theta functions. To prove these results, we extend the Kudla-Millson theta lift of Bruinier and Funke to meromorphic modular functions.
\end{abstract}

\maketitle

\section{Introduction and statement of results}
\label{sec:introduction}

The special values of the modular $j$-invariant
\[
j(z) = q^{-1} + 744 + 196884q + 21493760q^{2} + 864299970q^{3} + \dots 
\] 
at imaginary quadratic points in the upper half-plane are called singular moduli. By the theory of complex multiplication they are algebraic integers in the ray class fields of certain orders in imaginary quadratic fields. In particular, their traces
\[
\tr_{j}(D) = \sum_{Q \in \mathcal{Q}_{D}^{+}/ \Gamma}\frac{j(z_{Q})}{|\overline{\Gamma}_{Q}|}
\]
are known to be rational integers. Here $\mathcal{Q}_{D}^{+}$ denotes the set of positive definite quadratic forms of discriminant $D < 0$, on which $\Gamma = \SL_{2}(\Z)$ acts with finitely many orbits. Further, $\overline{\Gamma}_{Q}$ is the stabilizer of $Q$ in $\overline{\Gamma} = \PSL_{2}(\Z)$ and $z_{Q} \in \H$ is the CM point associated to $Q$.

In his seminal paper ``Traces of singular moduli'' (see \cite{zagiertraces}) Zagier showed that the generating series
\[
q^{-1} - 2 - \sum_{D < 0}\tr_{J}(D)q^{-D} = q^{-1}-2+248q^{3}-492q^{4}+4119q^{7}-7256q^{8}+\dots
\]
of traces of CM values of $J = j-744$ is a weakly holomorphic modular form of weight $3/2$ for $\Gamma_{0}(4)$. It follows from this result, by adding a multiple of Zagier's mock modular Eisenstein series of weight $3/2$ (see \cite{zagiereisensteinseries}), that the generating series 
\[
q^{-1} +60- \sum_{D < 0}\tr_{j}(D)q^{-D} = q^{-1}+60-864q^{4}+3375q^{7}-8000q^{8}+\dots
\]
of traces of singular moduli is a mock modular form of weight $3/2$ for $\Gamma_{0}(4)$ whose shadow is a multiple of the Jacobi theta function $\theta(\tau)=\sum_{n \in \Z}q^{n^{2}}$. These results have been generalized in various directions, for example to generating series of traces of CM values of weakly holomorphic modular functions for congruence subgroups by Bruinier and Funke \cite{bruinierfunketraces}, using the so-called Kudla-Millson theta lift. 

The starting point for the present article was the question whether the generating series of traces
\[
\tr_{1/j}(D) = \sum_{Q \in \mathcal{Q}_{D}^{+}/ \Gamma}\frac{1/j(z_{Q})}{|\overline{\Gamma}_{Q}|}
\]
of reciprocal singular moduli has similar modular transformation properties. Notice that $1/j$ has a third order pole at $\rho = e^{\pi i /3}$, so the CM value of $1/j$ at this point is not defined. However, if we replace $1/j(\rho)$ by the constant term in the elliptic expansion of $1/j$ around $\rho$ (see \eqref{eq elliptic expansion}), then $\tr_{1/j}(D)$ is defined for every $D < 0$. By the theory of complex multiplication the traces $\tr_{1/j}(D)$ are rational numbers, but they are usually not integers.

In order to obtain a convenient modularity statement we have to add a constant term, which is given by the regularized average value of $1/j$ over $\Gamma \backslash \H$,
\[
\tr_{1/j}(0) = -\frac{1}{4\pi}\int_{\Gamma \backslash \H}^{\reg}1/j(z)\frac{dx dy}{y^{2}} = -\frac{1}{2^{11}\cdot 3^{4}} =-\frac{1}{165888}.
\]
We refer to Section~\ref{section traces} for the definition of the regularized average value and to Corollary~\ref{corollary average value 1/j} for the evaluation of $\tr_{1/j}(0)$. 

To describe the shadow of the generating series of traces of reciprocal singular moduli, we require the theta functions
\begin{align*}
\theta_{7/2,h}(\tau) &= v^{-3/2}\sum_{\substack{a \in \Z \\ a \equiv h \!\!\!\pmod 3}}H_{3}\left(2\sqrt{\pi v}\frac{a}{\sqrt{3}} \right)q^{a^{2}/3}, \\
\theta_{4,h}(\tau)&= \sum_{\substack{b,c \in \Z \\ b \equiv c \!\!\!\pmod 2 \\ b \equiv h \!\!\!\pmod 3}}\left(b-i\sqrt{3}c\right)^{3}q^{b^{2}/3+c^{2}},
\end{align*}
with $\tau = u+iv \in \H$ and the Hermite polynomial $H_{3}(x) = 8x^{3}-12x$. Note that $\theta_{7/2,h}$ is (up to a non-zero constant multiple) the image under the Maass raising operator $R_{3/2} = 2i\frac{\partial}{\partial \tau}+\frac{3}{2}v^{-1}$ of the weight $3/2$ unary theta function $\theta_{3/2,h} = \sum_{a \equiv h \!\!\!\pmod 3}aq^{a^2/3}$, and $\theta_{4,h}$ is a binary theta function associated to a harmonic polynomial of degree $3$. In particular, they transform like modular forms of weight $7/2$ and of weight $4$ for $\Gamma(24)$, respectively. 

We obtain the following modularity statement.

\begin{theorem}\label{theorem main result}
	The generating series
	\[
	\sum_{D \leq 0}\tr_{1/j}(D)q^{-D} = -\frac{1}{165888} + \frac{23}{331776}q^{3} + \frac{1}{3456}q^{4} - \frac{1}{3375}q^{7} + \frac{1}{8000}q^{8} + \dots
%	-\frac{1}{32768}q^{11}+\dots
	\]
	of traces of reciprocal singular moduli converges absolutely and locally uniformly, and defines a mixed mock modular form of weight $3/2$ and depth $2$ for $\Gamma_{0}(4)$ (in the sense of \cite{dmz,gmn}, compare Section~\ref{section mock modular forms}). Its shadow is a non-zero multiple of 
	\[
	\sum_{h \!\!\!\pmod 3} v^{7/2}\overline{\theta_{7/2,h}(\tau)}\theta_{4,h}(\tau).
	\]
%	\[
%	\sum_{h \!\!\!\pmod 3}\bigg(\sum_{\substack{a \in \Z \\ a \equiv h \!\!\!\pmod 3}}v^{-3/2}H_{3}\left(2\sqrt{\pi v}\frac{a}{\sqrt{3}} \right)q^{a^{2}/3}\cdot v^{4} \overline{\sum_{\substack{b,c \in \Z \\ b \equiv c \!\!\!\pmod 2 \\ b \equiv h \!\!\!\pmod 3}}\left(b-i\sqrt{3}c\right)^{3}q^{b^{2}/3+c^{2}}}\bigg).
%	\]
%	with the third Hermite polynomial $H_{3}(x) = 8x^{3}-12x$.
\end{theorem}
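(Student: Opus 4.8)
The plan is to realize the generating series as a regularized Kudla-Millson theta lift of the meromorphic modular function $1/j$, thereby extending the lift of Bruinier and Funke from weakly holomorphic inputs to meromorphic ones. Concretely, I would study
\[
\Phi(\tau) = \int_{\Gamma \backslash \H}^{\reg} \frac{1}{j(z)}\, \Theta_{\KM}(\tau,z)\, \frac{dx\,dy}{y^{2}},
\]
where $\Theta_{\KM}(\tau,z)$ is the Kudla-Millson theta kernel, transforming with weight $3/2$ for $\Gamma_{0}(4)$ in the variable $\tau$ and invariantly under $\Gamma$ in $z$. The first task is to make sense of this integral. Since $1/j$ decays at the cusp $\infty$, no regularization is needed there; the only singularity is the pole of order three of $1/j$ on the $\Gamma$-orbit of $\rho = e^{\pi i/3}$. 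I would therefore excise a small $\Gamma$-invariant disc around $\rho$ and define $\Phi$ by a limiting procedure, using the elliptic expansion of $1/j$ around $\rho$ to control the contribution of the excised region.

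Next I would compute the Fourier expansion of $\Phi(\tau)$. Inserting the explicit Gaussian Schwartz function underlying $\Theta_{\KM}$ and unfolding against the sum over quadratic forms of discriminant $D$, each CM point $z_{Q}$ contributes a value of $1/j$; summing over $\Gamma$-classes reproduces $\tr_{1/j}(D)$ as the coefficient of $q^{-D}$ for $D < 0$, with the elliptic-expansion regularization at $\rho$ feeding into the definition of $\tr_{1/j}(D)$ exactly as in \eqref{eq elliptic expansion}. The zeroth coefficient should match the regularized average value $\tr_{1/j}(0)$ computed in Section~\ref{section traces}. Absolute and locally uniform convergence of the resulting series follows from estimating $1/j(z_{Q})$ against the growth of the CM points as $|D| \to \infty$.

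For the mock modularity and the shadow I would apply the lowering operator $\xi_{3/2}$ to $\Phi$. Because $\Theta_{\KM}$ is only harmonic, not holomorphic, in $\tau$, and because the regularization introduces boundary terms, $\Phi$ fails to be modular in a controlled way; the key is the identity expressing $\xi_{3/2,\tau}\Theta_{\KM}$ through a raising or lowering operator applied to a companion (Millson or Siegel) theta kernel in the variable $z$. Integrating by parts and applying Stokes' theorem, the interior contribution vanishes because $1/j$ is holomorphic in $z$, so the entire shadow localizes on a small boundary circle around the pole $\rho$. Evaluating this boundary integral via the order-three elliptic expansion of $1/j$ produces, after a residue computation, the degree-three data: the Hermite polynomial $H_{3}$ in the unary factor $\theta_{7/2,h}$ and the cubic $(b - i\sqrt{3}\,c)^{3}$ in the binary factor $\theta_{4,h}$, assembled into $\sum_{h \!\!\!\pmod 3} v^{7/2}\overline{\theta_{7/2,h}(\tau)}\theta_{4,h}(\tau)$. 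The depth-two structure arises because this shadow is itself a product of the holomorphic weight-$4$ theta function $\theta_{4,h}$ with the non-holomorphic raised theta $v^{7/2}\overline{\theta_{7/2,h}}$, rather than a genuine cusp form.

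The main obstacle I anticipate is precisely the meromorphicity of the input at the elliptic point $\rho$. In the holomorphic Bruinier-Funke setting no interior boundary terms appear and the lift is (mock) modular with a classical cusp-form shadow; here the pole forces both a delicate regularization and the boundary contribution that produces the depth-two mixed mock structure. Controlling the interchange of $\xi_{3/2}$ with the regularized integral, and extracting the exact local contribution of the order-three pole in terms of the Hermite polynomial and the harmonic cubic, is the technical heart of the argument; matching the normalizations so that the boundary term lands on the stated theta combination is where the bulk of the computation will lie.
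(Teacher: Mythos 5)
Your proposal follows essentially the same route as the paper: the regularized Kudla--Millson lift of $1/j$, Stokes' theorem to extract the traces as the holomorphic Fourier coefficients (with the boundary terms at the order-three pole $\rho$ producing the non-holomorphic part), polynomial bounds on $1/j(z_Q)$ for convergence, and the differential identity relating the kernel to the derivative of the Siegel theta function to localize the shadow at $\rho$, where the third-order elliptic expansion yields $H_3$ and the harmonic cubic. The only slip is terminological: the completed lift $\Phi$ \emph{is} modular of weight $3/2$ but non-holomorphic; it is the generating series itself that fails to be modular.
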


In order to prove the theorem, we extend the Kudla-Millson theta lift of Bruinier and Funke \cite{bruinierfunketraces} to modular functions which are allowed to have poles in $\H$. For simplicity, we restrict our attention to those meromorphic modular functions which decay like cusp forms towards $\infty$. We let $\mathbb{S}_{0}$ be the space of all such meromorphic modular forms of weight $0$. The theta lift of $f \in \mathbb{S}_{0}$ is defined by the regularized inner product (see Section~\ref{section inner products})
\begin{equation}\label{kmliftmero}
\Phi_{\KM}(f,\tau) = \left\langle f, \overline{\Theta_{\KM}(\cdot,\tau)} \right\rangle^{\reg},
\end{equation}
where $\Theta_{\KM}(z,\tau)$ is the Kudla-Millson theta function (see Section~\ref{section theta functions}). The theta function transforms like a modular form of weight $3/2$ for $\Gamma_{0}(4)$ in $\tau$, and thus the same holds for the theta lift $\Phi_{\KM}(f,\tau)$. The technical heart of this work is the explicit computation of the Fourier expansion of $\Phi_{\KM}(f,\tau)$ (see Theorem~\ref{theorem fourier expansion}). We show that the ``holomorphic part'' of $\Phi_{\KM}(f,\tau)$ is given by the generating series of traces of CM values of $f$. On the other hand, the ``non-holomorphic part'' of $\Phi_{\KM}(f,\tau)$ can be viewed as a termwise preimage of an indefinite theta function under the lowering operator (see Corollary~\ref{corollary shadow}). In particular, we obtain an explicit formula for the image of $\Phi_{\KM}(f,\tau)$ under the lowering operator. Choosing $f = 1/j$ and doing some simplicifications (see Section~\ref{section proof main result}) then yields Theorem~\ref{theorem main result}.

\begin{remark} Theorem~\ref{theorem main result} could also be proved using results of Bringmann, Ehlen, and the second author, namely by taking the constant term in the elliptic expansion at $z = \rho$ of the function $\frac{1}{j'(z)}A^{*}(z,\tau)$ defined in Theorem 1.1 of \cite{bes}.
\end{remark}

\begin{remark}
	Similar theta lifts of meromorphic modular forms were studied by Bruinier, Imamoglu, Funke, and Li in \cite{bify} and by Bringmann and the authors of the present work in \cite{abs}. It was shown there that the generating series of traces of cycle integrals of meromorphic modular forms of positive even weight can be completed to real-analytic modular forms of half-integral weight whose images under the lowering operator are given by certain indefinite theta functions. 
\end{remark}

\begin{remark}
	In \cite{zagiertraces} Zagier also showed the weight $1/2$ modularity of twisted traces of $J= j-744$. Using the so-called Millson theta function (compare \cite{ansmillson}) one can investigate the modularity of the generating series of twisted traces of $1/j$. This is the topic of an ongoing Master's thesis under the supervision of the first author.
\end{remark}

\begin{remark} \label{remark general setup}Using a vector-valued setup as in \cite{bruinierfunketraces} one can generalize the results of the present work to arbitrary congruence subgroups. For the convenience of the reader we state the Fourier expansion of the Kudla-Millson theta lift of meromorphic modular functions for congruence subgroups in Section~\ref{section general result}.
\end{remark}

The work is organized as follows. In Section~\ref{section preliminaries} we recall the necessary facts about mixed mock modular forms, (singular) theta functions, regularized inner products of meromorphic and real-analytic modular forms, as well as traces of CM values and average values of meromorphic modular forms. In Section~\ref{section km lift} we study the Kudla-Millson theta lift $\Phi_{\KM}(f,\tau)$ of meromorphic modular forms $f \in \mathbb{S}_{0}$. We compute its Fourier expansion and determine its image under the lowering operator. In Section~\ref{section proof main result} we show how Theorem~\ref{theorem main result} can be deduced using the Kudla-Millson theta lift of $1/j$. Finally, in Section~\ref{section general result} we state the Fourier expansion of the Kudla-Millson theta lift of meromorphic modular forms for congruence subgroups.

\section{Preliminaries}
\label{section preliminaries}

\subsection{Mixed mock modular forms}\label{section mock modular forms}

We briefly recall the notion of mixed mock modular forms of higher depth from \cite{dmz}, Section~7.3, and \cite{gmn}, Section~3.2. First, a mixed mock modular form $h$ of weight $k \in \frac{1}{2}\Z$ and depth $1$ is a holomorphic function on $\H$ which is of polynomial growth at the cusps, and for which there exist finitely many holomorphic modular forms $f_j \in M_{\ell_j}$ and $g_j \in M_{2-k+\ell_j}$ for some $\ell_j \in \frac{1}{2}\Z$ such that the \emph{completion} 
\begin{align}\label{eq mock theta completion}
\widehat{h}(\tau)= h(\tau)+\sum_j f_j(\tau) g_j^{*}(\tau)
\end{align}
is a real-analytic modular form of weight $k$. Here 
\[
g^*(\tau) = (-2i)^{1-\ell}\int_{-\overline{\tau}}^{i\infty}\overline{g_j(-\overline{z})}(z+\tau)^{\ell-2}dz
\]
denotes the non-holomorphic Eichler integral of a modular form $g \in M_{\ell}$. In this case the function 
\[
\xi_k\widehat{h}(\tau) = \sum_j v^{\ell_j}\overline{f_j(\tau)} g_j(\tau)
\]
is called the \emph{shadow} of $h$. Here $\xi_k$ is the antilinear differential operator
\[
\xi_k = 2iv^k \overline{\frac{\partial}{\partial \overline{\tau}}}.
\]
A mixed mock modular form $h$ of weight $k$ and depth $n \geq 2$ is defined to be the holomorphic part of a real-analytic modular form $\widehat{h}$ of weight $k$ whose image under the $\xi$-operator is a finite linear combination of the form
\[
\xi_k \widehat{h}(\tau) = \sum_j v^{\ell_j}\overline{f_j(\tau)} g_j(\tau),
\] 
where $g_j \in M_{2-k+\ell_j}$, and $f_j$ is the modular completion of a mixed mock modular form of weight $\ell_j$ and depth $n-1$. Here, the meaning of the holomorphic part of a real-analytic modular form is deliberately kept vague in order to include many natural examples.

\begin{example}\label{example mock modular} We show that the unary theta function $\theta_{7/2,h}$ from the introduction is the completion of a mixed mock modular form of weight $7/2$ and depth $1$. To this end, we write it as
\begin{align*}
\theta_{7/2,h}(\tau) = \frac{64\pi^{3/2}}{3\sqrt{3}}\sum_{\substack{a \in \Z \\ a \equiv h \!\!\!\pmod 3}}a^3q^{a^2/3} - \frac{24\pi^{1/2}}{\sqrt{3}v}\sum_{\substack{a \in \Z \\ a \equiv h \!\!\!\pmod 3}}aq^{a^2/3}.
\end{align*}
The first sum is holomorphic on $\H$ and at the cusps, and is the mixed mock modular part of $\theta_{7/2,h}(\tau)$. The second summand is a product of $v^{-1}$ (which is the Eichler integral of a constant) and a multiple of the holomorphic unary theta series $\theta_{3/2,h}$. Furthermore, we see that the shadow of $\theta_{7/2,h}$, that is, its image under $\xi_{7/2}$, is a multiple of $
v^{3/2}\overline{\theta_{3/2,h}(\tau)}$.
\end{example}

\subsection{Quadratic forms}
\label{section quadratic forms}

We let $\mathcal{Q}_{D}$ be the set of all integral binary quadratic forms $Q = [a,b,c]$ of discriminant $D = b^{2}-4ac$, and for $D < 0$ we let $\mathcal{Q}_{D}^{+}$ be the subset of positive definite forms. The group $\Gamma = \SL_{2}(\Z)$ acts from the right on $\mathcal{Q}_{D}$ and $\mathcal{Q}_{D}^{+}$, with finitely many orbits if $D \neq 0$.

For $Q  = [a,b,c] \in \mathcal{Q}_{D}$ and $z = x+iy \in \H$ we define the quantities
\[
Q(z,1) = az^{2}+bz + c, \qquad Q_{z} = \frac{1}{y}(a|z|^{2}+bx+c).
\]
They are related by
\begin{align}\label{eq Qz differential equations}
Q_{z}^{2} = y^{-2}|Q(z,1)|^{2}-D, \qquad \frac{\partial}{\partial \overline{z}}Q_{z} = -\frac{i}{2y^{2}}Q(z,1).
\end{align}
For $D < 0$ the CM point $z_{Q} \in \H$ associated to $Q \in \mathcal{Q}_{D}$ is defined as the unique root of $Q(z,1) = 0$ in $\H$. We can factor $Q(z,1)$ as
\begin{align}\label{eq Qz1 elliptic variable}
Q(z,1) = \frac{\sqrt{|D|}}{2\imag(z_{Q})}(z-z_{Q})(z-\overline{z}_{Q}).
\end{align}
%We can express $Q(z,1)$ in terms of the elliptic local variable 
%\[
%X_{z_{Q}}(z) = \frac{z-z_{Q}}{z-\overline{z}_{Q}}
%\]
%at the CM point $z_{Q}$ by
%\begin{align}\label{eq Qz1 elliptic variable}
%Q(z,1) = \frac{\sqrt{|D|}}{2\imag(z_{Q})}(z-\overline{z}_{Q})^{2}X_{z_{Q}}(z).
%\end{align}

\subsection{Theta functions}
\label{section theta functions} 

The Kudla-Millson theta function is defined for $z = x+iy \in \H$ and $\tau = u+iv \in \H$ as
\begin{align}\label{eq km theta function}
\Theta_{\KM}(z,\tau)= \sum_{D\in\Z} \sum_{Q\in\calQ_D} \varphi_{\KM}(Q,z,v) e^{-2\pi i D \tau },
\end{align}
where we set
\begin{align*}
\varphi_{\KM}(Q,z,v) = \left(4vQ_z^2-\frac{1}{2\pi}\right)e^{-4\pi v \frac{|Q(z,1)|^{2}}{y^{2}}}.
\end{align*}
The function $\Theta_{\KM}(z,\tau)$ is real-analytic in both variables and transforms like a modular form of weight $0$ in $z$ for $\Gamma$ and weight $3/2$ in $\tau$ for $\Gamma_0(4)$ (see \cite{kudlamillson86}, \cite{bruinierfunkeontwogeometric}). Moreover, as a function of $z$ it decays square exponentially towards the cusp $\infty$ (see \cite{bruinierfunketraces}). 

We also require the theta function
\begin{align}\label{eq km theta function adjoint}
\Theta_{\KM}^{*}(z,\tau) = \sum_{D\in\Z} \sum_{Q\in\calQ_D} \varphi_{\KM}^{*}(Q,z,v) e^{-2\pi i D \tau },
\end{align}
where
\begin{align*}
 \varphi_{\KM}^{*}(Q,z,v) = -\frac{v^{2}}{2y^{2}}\overline{Q(z,1)}Q_{z}e^{-4\pi v \frac{|Q(z,1)|^{2}}{y^{2}}}.
\end{align*}
It is a multiple of the derivative in $z$ of the Siegel theta function $\Theta_{\S}(z,\tau)$, which is the theta function associated to $\varphi_{\S}(Q,z,v) = v\exp(-4\pi v|Q(z,1)|^{2}/y^{2})$. In particular, it transforms like a modular form of weight $2$ in $z$ for $\Gamma$ and weight $-1/2$ in $\tau$ for $\Gamma_{0}(4)$ (see \cite{borcherds}).

\subsection{Singular theta functions}
\label{section singular theta functions}
For $Q \neq 0$ we consider the function
\begin{align}\label{eq eta}
\eta_{\KM}(Q,z,v) = \frac{Q_{z}}{2\pi Q(z,1)}e^{-4\pi v \frac{|Q(z,1)|^{2}}{y^{2}}}.
\end{align}
It is the derivative of Kudla's Green function $\xi_{\KM}$ (see \cite{kudla}), which was also used in \cite{bruinierfunketraces} to compute the Fourier expansion of the Kudla-Millson theta lift of harmonic Maass forms. For $D \geq 0$ the function $\eta_{\KM}$ is real-analytic in $z$ on all of $\H$. For $D < 0$ it is only real-analytic for $z \in \H \setminus \{z_{Q}\}$, but then the difference
\begin{align*}
\eta_{\KM}(Q,z,1) - \frac{\sgn(Q_{z})\sqrt{|D|}}{2\pi Q(z,1)}
\end{align*}
extends to a real-analytic function around $z_{Q}$ which vanishes at $z_{Q}$. This can easily be proved using the first relation in \eqref{eq Qz differential equations}.

For $z \in \H$ with $Q(z,1) \neq 0$ the function $\eta_{\KM}$ is related to $\varphi_{\KM}$ and $\varphi_{\KM}^{*}$ via the lowering operator $L_{\kappa} = -2iy^{2}\frac{\partial}{\partial \overline{z}}$ by
\begin{align}\label{eq eta differential}
L_{2,z}\eta_{\KM}(Q,z,v) =  \varphi_{\KM}(Q,z,v)
\end{align}
and
\begin{align}\label{eq eta differential siegel}
L_{3/2,\tau}\eta_{\KM}(Q,z,v) = \varphi_{\KM}^{*}(Q,z,v),
\end{align}
which can be checked by a direct calculation using \eqref{eq Qz differential equations}.

The ``theta function'' formed by summing $\eta_{\KM}(Q,z,v)e^{-2\pi i D\tau}$ over $D \in \Z$ and $Q \in \mathcal{Q}_{D} \setminus \{0\}$ would behave very badly as a function of $z$ since it would have a singularity at every CM point, that is, on a dense subset of $\H$. However, for fixed $D \in \Z$ and $\varrho \in \H$ we define the function
\begin{align}\label{eq theta preimage 1}
\widetilde{\Theta}_{\KM,D}^{*}(\varrho,v) = \sum_{\substack{Q \in \mathcal{Q}_{D} \setminus\{0\}\\ z_{Q} \neq \varrho}}\eta_{\KM}(Q,\varrho,v).
\end{align}
We can imagine it as the $(-D)$-th coefficient of the ``singular theta function'' $
\widetilde{\Theta}_{\KM}^*(\varrho,\tau)$ that one would obtain by multiplying \eqref{eq theta preimage 1} with $e^{-2\pi i D\tau}$ and summing up over all $D$. However, it is in not clear (and seems to be difficult to prove) that $\widetilde{\Theta}_{\KM}^{*}(\varrho,\tau)$ converges for every $\varrho$. Hence, for simplicity, we will not work with the full function $\widetilde{\Theta}_{\KM}^{*}(\varrho,\tau)$. By a slight abuse of notation, for $m \in \N_{0}$ we also set
\begin{align}\label{eq theta preimage}
R_{2,z}^{m}\widetilde{\Theta}_{\KM,D}^{*}(\varrho,v) = \sum_{\substack{Q \in \mathcal{Q}_{D}\setminus\{0\} \\ z_{Q} \neq \varrho}}\left(R_{2,z}^{m}\eta_{\KM}(Q,z,v)\right)|_{z = \varrho},
\end{align}
where $R_{2}^{m}= R_{m} \circ R_{m-2} \circ \dots \circ R_{2}$ with $R_{2}^{0} = \text{id}$ is an iterated version of the raising operator $R_{\kappa} = 2i\frac{\partial}{\partial z}+\kappa y^{-1}$. Using \eqref{eq eta differential siegel} we obtain
\begin{align}\label{eq theta lowering}
L_{3/2,\tau}R_{2,z}^{m}\widetilde{\Theta}_{\KM,D}^{*}(\varrho,v) = R_{2,z}^{m}\Theta_{\KM,D}^{*}(\varrho,v)
\end{align}
for every $m \in \N_{0}$ and $D \in \Z$, where $R_{2,z}^{m}\Theta_{\KM,D}^{*}(\varrho,v)$ denotes the coefficient at $e^{-2\pi i D \tau}$ of $R_{2,z}^{m}\Theta_{\KM}^{*}(\varrho,\tau)$

\subsection{Regularized inner products}
\label{section inner products}
We now describe the regularized inner product in \eqref{kmliftmero}. Let $f \in \mathbb{S}_0$. We denote by $[\varrho_{1}],\dots,[\varrho_{r}] \in \Gamma \backslash \H$ the equivalence classes of the poles of $f$ on $\H$ and we choose a fundamental domain $\mathcal{F}^{*}$ for $\Gamma \backslash \H$ containing $\varrho_{1},\dots,\varrho_{r}$ such that each $\varrho_{\ell}$ lies in the interior of $\overline{\Gamma}_{\varrho_{\ell}}\mathcal{F}^{*}$. For any $\varrho \in \H$ and $\varepsilon > 0$ we let
\begin{align*}
B_{\varepsilon}(\varrho) = \left \{z \in \H: |X_{\varrho}(z)| < \varepsilon \right \}, \qquad X_{\varrho}(z)= \frac{z-\varrho}{z-\overline{\varrho}},
\end{align*}
be the $\varepsilon$-ball around $\varrho$. Let $g: \mathbb{H} \to \mathbb{C}$ be a real-analytic and $\Gamma$-invariant function, and assume that it is of moderate growth at $\infty$. We define the regularized Petersson inner product of $f$ and $g$ by
\begin{align}\label{eq inner product}
\left\langle f, g \right\rangle^{\reg}  = \lim_{\varepsilon_{1},\dots,\varepsilon_{r} \to 0}\int_{\mathcal{F}^{*}\setminus \bigcup_{\ell=1}^{r}B_{\varepsilon_{\ell}}(\varrho_{\ell})}f(z)\overline{g(z)}\frac{dxdy}{y^{2}}.
\end{align}
It was shown in Proposition 3.2 of \cite{abs} that this regularized inner product exists under the present assumptions on $f$ and $g$. In particular, the theta lift defined in \eqref{kmliftmero} converges due to the rapid decay of the Kudla-Millson theta function at $\infty$.

Recall that $f$ has an elliptic expansion of the shape
\begin{align}\label{eq elliptic expansion}
f(z) = \sum_{n \gg -\infty}c_{f,\varrho}(n)X_{\varrho}^{n}(z)
\end{align}
around every $\varrho \in \H$ (see Proposition~17 in Zagier's part of \cite{brdgza08}). In order to evaluate regularized inner products as in \eqref{eq inner product} we will typically apply Stokes' Theorem, which yields integrals over the boundaries of the balls $B_{\varepsilon_{\ell}}(\varrho_{\ell})$. To compute such boundary integrals the following formula is useful.
	
	\begin{lemma}[Lemma 4.1 in \cite{abs}]\label{lemma residue theorem}
		Let $\varrho \in \H$, let $f\in \mathbb{S}_0$ be meromorphic near $\varrho$, and let $g:\mathbb H\to\C$ be real-analytic near $\varrho$. Then we have the formula
		\[
		\lim_{\varepsilon \to 0}\int_{\partial B_{\varepsilon}(\varrho)}f(z)g(z)dz = -4\pi\sum_{n \geq 1}\frac{\imag(\varrho)^{n}}{(n-1)!}c_{f,\varrho}(-n)R_{2}^{n-1}g(\varrho),
		\]
		where $c_{f,\varrho}(-n)$ are the coefficients of the elliptic expansion \eqref{eq elliptic expansion} of $f$.
	\end{lemma}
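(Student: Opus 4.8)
```latex
The plan is to prove the boundary-integral formula by reducing the computation to the elliptic coordinate $X_\varrho(z)$, in which the contour $\partial B_\varepsilon(\varrho)$ becomes a circle of radius $\varepsilon$ and the integral turns into a residue-type calculation. First I would substitute the elliptic expansion \eqref{eq elliptic expansion} of $f$ into the integral, so that
\[
\int_{\partial B_{\varepsilon}(\varrho)}f(z)g(z)\,dz = \sum_{n \gg -\infty}c_{f,\varrho}(n)\int_{\partial B_{\varepsilon}(\varrho)}X_{\varrho}^{n}(z)\,g(z)\,dz.
\]
Since $g$ is real-analytic near $\varrho$ and $X_\varrho^n(z)$ for $n \geq 0$ is holomorphic and vanishes to order $n$ at $\varrho$, those terms contribute nothing in the limit $\varepsilon \to 0$; likewise the principal part contains only finitely many terms because $f$ is meromorphic near $\varrho$. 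Hence only the terms with $n \leq -1$ survive, and the core of the argument is to evaluate $\lim_{\varepsilon\to 0}\int_{\partial B_{\varepsilon}(\varrho)}X_{\varrho}^{-n}(z)\,g(z)\,dz$ for each $n \geq 1$.

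Next I would change variables to $w = X_\varrho(z)$. On $\partial B_\varepsilon(\varrho)$ we have $|w| = \varepsilon$, and inverting the Möbius transformation gives $z$ and $dz$ explicitly in terms of $w$; in particular $dz = \frac{(\varrho-\overline\varrho)}{(1-w)^2}\,dw$, and as $\varepsilon\to 0$ we may expand $g(z(w))$ as a power series in $w$ and $\overline w$. The key point is that on the circle $|w|=\varepsilon$ one has $\overline{w} = \varepsilon^2/w$, so every monomial $w^j \overline{w}^k$ integrates against $w^{-n}\,dw/(1-w)^2$ to produce, via the residue at $w=0$, a contribution governed by the Taylor expansion of $g$ at $\varrho$. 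Carrying this out term by term, the coefficient of $c_{f,\varrho}(-n)$ assembles into the iterated raising operator $R_2^{n-1}g(\varrho)$, since the raising operator $R_\kappa = 2i\frac{\partial}{\partial z}+\kappa y^{-1}$ is exactly the operator that records how the holomorphic-type derivatives of $g$ combine with the $y^{-1}$ factors arising from the metric in these coordinates. The factors $\imag(\varrho)^n/(n-1)!$ and the overall constant $-4\pi$ then emerge from the Jacobian $(\varrho-\overline\varrho) = 2i\,\imag(\varrho)$ and the combinatorics of differentiating $(1-w)^{-2}$ and the expansion of $\overline{w} = \varepsilon^2/w$.

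The main obstacle I anticipate is bookkeeping in the term-by-term matching: showing cleanly that the real-analytic (as opposed to holomorphic) dependence of $g$ on $z$ collapses, after the substitution $\overline{w} = \varepsilon^2/w$ and the limit $\varepsilon \to 0$, precisely into the iterated raising operator rather than some other combination of $\partial_z$ and $\partial_{\overline z}$. Because $g$ is only real-analytic, its expansion near $\varrho$ involves both $w$ and $\overline w$, and one must verify that the antiholomorphic part contributes through the $y^{-1}$ correction terms in $R_2^{n-1}$ in exactly the right way; the cleanest route is to use the identity $y = \imag(z) = \imag(\varrho)\frac{1-|w|^2}{|1-w|^2}$ to express $y^{-1}$ in the $w$-coordinate and to note that $R_\kappa$ in these coordinates acts as a shift on the exponent of $X_\varrho$, which makes the inductive structure transparent. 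Given that this is cited as Lemma 4.1 of \cite{abs}, I would either reproduce this residue computation directly or appeal to the standard elliptic-expansion machinery of Zagier's lectures in \cite{brdgza08} to justify the substitution and the interchange of summation and integration.
```
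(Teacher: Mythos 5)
The paper does not prove this lemma at all --- it imports it verbatim as Lemma~4.1 of \cite{abs} --- so there is no in-paper proof to compare against; your attempt should be measured against the argument in \cite{abs}, which it essentially reproduces. Your strategy is the right one and the computation does close up: writing $w=X_\varrho(z)$ gives $z-\overline\varrho=\frac{\varrho-\overline\varrho}{1-w}$, $dz=\frac{2i\imag(\varrho)}{(1-w)^2}dw$, the terms of the elliptic expansion with $n\ge 0$ die because the integrand stays bounded while the contour length is $O(\varepsilon)$, the substitution $\overline w=\varepsilon^2/w$ kills every monomial of $g(z(w))$ containing $\overline w$ in the limit, and the residue of $w^{j-n}(1-w)^{-2}$ at $0$ equals $n-j$ for $0\le j\le n-1$. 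What you leave as an assertion is the one identity that actually carries the content of the lemma, namely that
\[
\sum_{j=0}^{n-1}(n-j)\,b_{j,0}=\frac{\imag(\varrho)^{n-1}}{(n-1)!}\,R_2^{n-1}g(\varrho),
\]
where $b_{j,0}$ is the coefficient of $w^j\overline w^{\,0}$ in $g(z(w))$; this is true (I checked $n\le 3$ directly, e.g.\ $2b_{0,0}+b_{1,0}=2g(\varrho)+2i\imag(\varrho)\partial_zg(\varrho)=\imag(\varrho)R_2g(\varrho)$) and follows by the induction you sketch, but it must be written out, since a priori $R_2^{n-1}$ also involves $\partial_{\overline z}$-free but $y^{-1}$-weighted combinations whose matching with the coefficients of $(1-w)^{-p}$ is exactly the point. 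Two minor omissions: you should fix the (counterclockwise) orientation of $\partial B_\varepsilon(\varrho)$, which is what produces the factor $2\pi i\cdot(\varrho-\overline\varrho)=-4\pi\imag(\varrho)$ with the stated sign, and note that the interchange of the sum over $n$ with the integral is harmless because the principal part is finite and the remainder is holomorphic, hence bounded, near $\varrho$.
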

	
\subsection{Traces of CM values and average values of meromorphic modular forms}
\label{section traces}

For $D < 0$ we define the $D$-th trace of $f \in \mathbb{S}_0$ by
\[
\tr_{f}(D) = \sum_{Q \in \mathcal{Q}_{D}^{+}/\Gamma}\frac{c_{f,z_{Q}}(0)}{|\overline{\Gamma}_{Q}|},
\]
where $c_{f,z_{Q}}(0)$ is the constant coefficient in the elliptic expansion \eqref{eq elliptic expansion} of $f$ around the CM point $z_{Q}$. If $f$ is holomorphic at $z_{Q}$ then we have $c_{f,z_{Q}}(0) = f(z_{Q})$. 

We define the trace of index $0$ of $f \in \mathbb{S}_0$ with poles at $[\varrho_{1}],\dots,[\varrho_{r}] \in \Gamma \backslash \H$ by
\[
\tr_{f}(0) = -\frac{1}{4\pi}\langle f,1 \rangle^{\reg} = -\frac{1}{4\pi}\lim_{\varepsilon_{1},\dots,\varepsilon_{r} \to 0}\int_{\mathcal{F}^{*}\setminus \bigcup_{\ell=1}^{r}B_{\varepsilon_{\ell}}(\varrho_{\ell})}f(z)\frac{dx dy}{y^{2}}.
\]
One can view $\tr_{f}(0)$ as the regularized average value of $f$ on $\Gamma \backslash \H$. Similarly as in \cite{borcherds}, Theorem~9.2, or \cite{bruinierfunketraces}, Remark~4.9, it can be evaluated in terms of special values of the real-analytic weight $2$ Eisenstein series
\[
E_{2}^{*}(z) = -\frac{3}{\pi y}+1-24\sum_{n=1}^{\infty}\sigma_{1}(n)e^{2\pi i nz}
\]
as follows.

\begin{lemma}\label{lemma average value evaluation}
	For $f \in \mathbb{S}_0$ with poles at $[\varrho_{1}],\dots,[\varrho_{r}] \in \Gamma \backslash \H$ we have
	\[
	\tr_{f}(0) = \frac{\pi}{3}\sum_{\ell=1}^{r}\frac{1}{|\overline{\Gamma}_{\varrho_{\ell}}|}\sum_{n \geq 1}\frac{\imag(\varrho_{\ell})^{n}}{(n-1)!}c_{f,\varrho_{\ell}}(-n)R_{2}^{n-1}E_{2}^{*}(\varrho_{\ell}),
	\]
	where $c_{f,\varrho}(-n)$ are the coefficients of the elliptic expansion \eqref{eq elliptic expansion} of $f$.
\end{lemma}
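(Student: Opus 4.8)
The plan is to turn the regularized integral $\langle f,1\rangle^{\reg}$ into boundary integrals around the excised poles and then evaluate those by Lemma~\ref{lemma residue theorem}. The key input is the differential behaviour of the real-analytic Eisenstein series: since only the term $-\tfrac{3}{\pi y}$ in $E_2^*$ is non-holomorphic, a one-line computation gives $\frac{\partial}{\partial\overline{z}}E_2^*(z) = \frac{3i}{2\pi y^2}$, equivalently $L_2 E_2^*(z) = \frac{3}{\pi}$. Because $f$ is holomorphic away from its poles and $d\overline{z}\wedge dz = 2i\,dx\wedge dy$, this upgrades to the exactness identity
\[
f(z)\,\frac{dx\,dy}{y^2} = -\frac{\pi}{3}\,d\!\left(f(z)E_2^*(z)\,dz\right)
\]
valid on $\H$ minus the poles of $f$, which is the engine of the whole argument.

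First I would apply Stokes' theorem to the right-hand side over the truncated region $\mathcal{F}^*\setminus\bigcup_{\ell}B_{\varepsilon_\ell}(\varrho_\ell)$, on which $f$ is smooth. The $1$-form $f E_2^*\,dz$ is $\Gamma$-invariant — $f$ has weight $0$, $E_2^*$ has weight $2$, and $dz$ transforms with weight $-2$ — so the contributions of the $\Gamma$-identified sides of $\partial\mathcal{F}^*$ cancel against one another, and the contribution from the truncation near the cusp $\infty$ tends to $0$ because $f$ decays like a cusp form there while $E_2^*$ grows at most polynomially. Only the integrals over the boundary circles $\partial B_{\varepsilon_\ell}(\varrho_\ell)$ survive.

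Next I would account for the stabilizers. By the choice of $\mathcal{F}^*$ each $\varrho_\ell$ lies in the interior of $\overline{\Gamma}_{\varrho_\ell}\mathcal{F}^*$, and $\overline{\Gamma}_{\varrho_\ell}$ acts on $B_{\varepsilon_\ell}(\varrho_\ell)$ by rotations preserving $|X_{\varrho_\ell}|$; hence $B_{\varepsilon_\ell}(\varrho_\ell)\cap\mathcal{F}^*$ is a fundamental wedge for this action and, by invariance of $f E_2^*\,dz$, the arc produced by Stokes contributes exactly $1/|\overline{\Gamma}_{\varrho_\ell}|$ of the full circle integral. Tracking orientations (the circles bound the excised disks, so they enter with the sign that undoes the minus in the exactness identity), I arrive at
\[
\langle f,1\rangle^{\reg} = \frac{\pi}{3}\sum_{\ell=1}^r\frac{1}{|\overline{\Gamma}_{\varrho_\ell}|}\lim_{\varepsilon_\ell\to 0}\int_{\partial B_{\varepsilon_\ell}(\varrho_\ell)}f(z)E_2^*(z)\,dz.
\]
Applying Lemma~\ref{lemma residue theorem} with $g=E_2^*$ turns each limit into $-4\pi\sum_{n\geq 1}\frac{\imag(\varrho_\ell)^n}{(n-1)!}c_{f,\varrho_\ell}(-n)R_2^{n-1}E_2^*(\varrho_\ell)$, and combining the constants via $\tr_f(0)=-\frac{1}{4\pi}\langle f,1\rangle^{\reg}$ together with $-\frac{1}{4\pi}\cdot\frac{\pi}{3}\cdot(-4\pi)=\frac{\pi}{3}$ yields the asserted formula.

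I expect the main obstacle to be the orientation and stabilizer bookkeeping in the Stokes step, especially when an elliptic fixed point such as $i$ or $\rho$ lies on $\partial\mathcal{F}^*$: then the excised neighborhood is a wedge rather than a full disk, and the radial pieces of $\partial\mathcal{F}^*$ near $\varrho_\ell$ must be paired up under $\overline{\Gamma}_{\varrho_\ell}$ before both the cancellation of the sides and the factor $1/|\overline{\Gamma}_{\varrho_\ell}|$ become manifest. The cleanest way to avoid delicate casework is to descend to the quotient $\Gamma\backslash\H$ and integrate over it with small orbifold disks $\overline{\Gamma}_{\varrho_\ell}\backslash B_{\varepsilon_\ell}(\varrho_\ell)$ removed, so that invariance of the integrand makes the side cancellations automatic and the stabilizer factor emerges transparently from the quotient circle. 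The remaining ingredients — the identity $L_2 E_2^*=\tfrac{3}{\pi}$ and the vanishing of the cusp term — are routine direct estimates.
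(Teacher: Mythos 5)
Your proposal is correct and follows essentially the same route as the paper: the paper likewise uses the identity $L_{2}E_{2}^{*}=\tfrac{3}{\pi}$ to rewrite the regularized integral, applies Stokes' Theorem (citing a packaged version from Bringmann--Kane--Viazovska) to reduce to boundary circles around the poles, and then invokes Lemma~\ref{lemma residue theorem}. Your version simply spells out the exactness identity, the cancellation of the $\Gamma$-identified sides, the vanishing cusp contribution, and the stabilizer factor $1/|\overline{\Gamma}_{\varrho_{\ell}}|$, all of which the paper leaves implicit; the constants check out.
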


\begin{proof}
	The Eisenstein series satisfies $L_{2}E_{2}^{*}(z) = \frac{3}{\pi}$, so we can write
	\[
	\tr_{f}(0) = -\frac{1}{12}\lim_{\varepsilon_{1},\dots,\varepsilon_{r} \to 0}\int_{\mathcal{F}^{*}\setminus \bigcup_{\ell=1}^{r}B_{\varepsilon_{\ell}}(\varrho_{\ell})}f(z)L_{2}E_{2}^{*}(z)\frac{dx dy}{y^{2}}.
	\]
	Now using Stokes' Theorem (in the form given in Lemma~2.1 of \cite{bringmannkaneviazovska}) and Lemma~\ref{lemma residue theorem} easily gives the stated formula.
\end{proof}

Finally, for notational convenience, we set $\tr_f(D) = 0$ for $D > 0$.

\section{The Kudla-Millson theta lift}
\label{section km lift}

Let $f \in \mathbb{S}_0$ be a meromorphic modular form of weight $0$ which decays like a cusp form towards $\infty$ and let $[\varrho_{1}],\dots,[\varrho_{r}] \in \Gamma \backslash \H$ be the classes of poles of $f$ mod $\Gamma$. We define the Kudla-Millson theta lift of $f$ by 
\begin{align}\label{eq KM lift definition}
\Phi_{\KM}(f,\tau) = \left\langle f,\overline{\Theta_{\KM}(\cdot,\tau)}\right\rangle^{\reg} = \lim_{\varepsilon_{1},\dots,\varepsilon_{r} \to 0}\int_{\mathcal{F}^{*}\setminus \bigcup_{\ell=1}^{r}B_{\varepsilon_{\ell}}(\varrho_{\ell})}f(z)\Theta_{\KM}(z,\tau)\frac{dxdy}{y^{2}}.
\end{align}
Since $\Theta_{\KM}(z,\tau)$ is real-analytic in $z$ and decays square exponentially as $y$ goes to $\infty$, it follows from Proposition~3.2 of \cite{abs} that the theta lift converges for every $\tau \in \H$. In particular, it transforms like a modular form of weight $3/2$ for $\Gamma_{0}(4)$.

We now compute the Fourier expansion of the Kudla-Millson theta lift.

\begin{theorem}\label{theorem fourier expansion}
%	Let $f \in \mathbb{S}_0$ and let $[\varrho_{1}],\dots,[\varrho_{r}] \in \Gamma \backslash \H$ be the classes of poles of $f$ mod $\Gamma$. 
	The Fourier expansion of the Kudla-Millson theta lift of $f \in \mathbb{S}_{0}$ is given by
	\begin{align*}
	\Phi_{\KM}(f,\tau) = \sum_{D \in \Z}\left(2\tr_{f}(D) -4\pi \sum_{\ell=1}^{r}\frac{1}{|\overline{\Gamma}_{\varrho_{\ell}}|}\sum_{n\geq 1}\frac{\imag(\varrho_{\ell})^{n}}{(n-1)!}c_{f,\varrho_{\ell}}(-n)R_{2,z}^{n-1}\widetilde{\Theta}_{\KM,D}^{*}(\varrho_{\ell},v)\right)q^{-D},
	\end{align*}
	where $c_{f,\varrho}(-n)$ are the coefficients of the elliptic expansion \eqref{eq elliptic expansion} of $f$ and $R_{2,z}^{n-1}\widetilde{\Theta}_{\KM,D}^{*}(\varrho,v)$ is defined in \eqref{eq theta preimage}. Recall that we set $\tr_f(D) = 0$ for $D > 0$.
\end{theorem}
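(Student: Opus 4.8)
The plan is to compute the coefficient of $q^{-D}$ in $\Phi_{\KM}(f,\tau)$ for each $D \in \Z$ by turning the regularized area integral into a sum of small-circle integrals via Stokes' theorem. Expanding $\Theta_{\KM}(z,\tau) = \sum_{D \in \Z}\Theta_{\KM,D}(z,v)q^{-D}$ with $\Theta_{\KM,D}(z,v) = \sum_{Q \in \mathcal{Q}_D}\varphi_{\KM}(Q,z,v)$, and using the square-exponential decay of $\Theta_{\KM}$ toward $\infty$ to justify interchanging the sum over $D$ with the regularized integral, I reduce the problem to showing that
\[
I_D(v) = \lim_{\varepsilon_1,\dots,\varepsilon_r \to 0}\int_{\mathcal{F}^*\setminus\bigcup_\ell B_{\varepsilon_\ell}(\varrho_\ell)}f(z)\,\Theta_{\KM,D}(z,v)\,\frac{dx\,dy}{y^2}
\]
equals the asserted coefficient.

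The key device is the relation \eqref{eq eta differential}, $L_{2,z}\eta_{\KM}(Q,z,v) = \varphi_{\KM}(Q,z,v)$. For $D \neq 0$ every $Q \in \mathcal{Q}_D$ is nonzero, the $\Gamma$-invariant function $\widetilde{\Theta}^*_{\KM,D}(z,v) = \sum_{Q \in \mathcal{Q}_D\setminus\{0\},\,z_Q \neq z}\eta_{\KM}(Q,z,v)$ converges for fixed $D$ by the Gaussian factor, and it satisfies $L_{2,z}\widetilde{\Theta}^*_{\KM,D} = \Theta_{\KM,D}$ away from the CM points. Since $f$ is meromorphic and $L_2 = -2iy^2\partial_{\bar z}$, the integrand becomes the exact form $f\,\Theta_{\KM,D}\,y^{-2}\,dx\,dy = -d\bigl(f\,\widetilde{\Theta}^*_{\KM,D}\,dz\bigr)$ off the poles of $f$ and the CM points $z_Q$ (where $\widetilde{\Theta}^*_{\KM,D}$ is singular when $D<0$). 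I would therefore excise, besides the balls $B_{\varepsilon_\ell}(\varrho_\ell)$, small balls around the finitely many CM points $z_Q \in \mathcal{F}^*$ and apply Stokes' theorem. Because $\eta_{\KM}$ transforms with weight $2$ in $z$ and $\Gamma$ permutes the forms of discriminant $D$, the $1$-form $\widetilde{\Theta}^*_{\KM,D}\,dz$ is $\Gamma$-invariant, so the paired sides of $\partial\mathcal{F}^*$ cancel, and the contribution near the cusp vanishes since $f$ decays like a cusp form while $\widetilde{\Theta}^*_{\KM,D}$ is of moderate growth. Thus $I_D(v)$ collapses to two families of small-circle integrals.

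These are then evaluated separately. Around a pole $\varrho_\ell$ of $f$ that is not a CM point, $\widetilde{\Theta}^*_{\KM,D}$ is real-analytic, and Lemma~\ref{lemma residue theorem} with $g = \widetilde{\Theta}^*_{\KM,D}$, together with the factor $1/|\overline{\Gamma}_{\varrho_\ell}|$ from the fact that $\mathcal{F}^*$ contains only a $1/|\overline{\Gamma}_{\varrho_\ell}|$-sector at $\varrho_\ell$, produces exactly the stated pole term. Around a CM point $z_Q \in \mathcal{F}^*$ (for $D<0$) at which $f$ is holomorphic, the singularity of $\widetilde{\Theta}^*_{\KM,D}$ comes from the two forms $\pm Q$ sharing the root $z_Q$, each contributing the meromorphic simple pole $\tfrac{\sgn(Q_z)\sqrt{|D|}}{2\pi Q(z,1)}$; by the factorization \eqref{eq Qz1 elliptic variable} its residue is $\sgn(Q_z)/2\pi i$, and since $\sgn(Q_z) = +1$ on positive definite forms the circle integral contributes $2c_{f,z_Q}(0)/|\overline{\Gamma}_Q|$, which sums over $\mathcal{Q}_D^+/\Gamma$ to $2\tr_f(D)$. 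For $D = 0$ the form $Q=0$ gives $\varphi_{\KM}(0,z,v) = -\tfrac{1}{2\pi}$, so it contributes $-\tfrac{1}{2\pi}\langle f,1\rangle^{\reg} = 2\tr_f(0)$, while the nonzero forms of discriminant $0$ have no CM points in $\H$ and yield only pole terms; for $D>0$ there are no CM points and $\tr_f(D) = 0$ by convention.

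The main obstacle is the case where a pole $\varrho_\ell$ of $f$ coincides with a CM point $z_{Q_0}$ of discriminant $D$, which is exactly what occurs for $f = 1/j$ at $\varrho = \rho$ with $D = -3$. Then $\widetilde{\Theta}^*_{\KM,D}$ is itself singular at $\varrho_\ell$ and Lemma~\ref{lemma residue theorem} cannot be applied directly. I would split off the finitely many forms $Q$ with $z_Q = \varrho_\ell$, whose $\eta_{\KM}$ contributes a meromorphic simple pole, from the remaining sum, which is real-analytic at $\varrho_\ell$ and agrees with $\widetilde{\Theta}^*_{\KM,D}(\varrho_\ell,v)$ as defined in \eqref{eq theta preimage}. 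Applying Lemma~\ref{lemma residue theorem} to the real-analytic part reproduces the pole term, and a careful residue computation for the split-off meromorphic part, organized in the elliptic coordinate $X_{\varrho_\ell}$ so as to match the elliptic expansion of $f$, must be shown to deposit the constant coefficient $c_{f,\varrho_\ell}(0)$ into the trace term $2\tr_f(D)$ while all cross terms in the principal coefficients $c_{f,\varrho_\ell}(-n)$ cancel. Verifying this reorganization, along with the convergence of the $\eta_{\KM}$-sums for fixed $D$ and the interchanges of summation and integration, is the technical heart of the argument.
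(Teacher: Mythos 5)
Your proposal follows the paper's proof essentially verbatim: the same use of $\eta_{\KM}$ and the lowering identity \eqref{eq eta differential}, Stokes' theorem with balls excised around the poles of $f$ and the CM points, Lemma~\ref{lemma residue theorem} for the pole contributions, the splitting off of $Q=0$ for $D=0$, and the residue computation at the CM points producing $2\tr_{f}(D)$. The verification you defer in the coincidence case is carried out in the paper exactly as you anticipate --- the integrals $\int_{\partial B_{\varepsilon}(\varrho_{\ell})}(z-\varrho_{\ell})^{n-1}(z-\overline{\varrho}_{\ell})^{-n-1}\,dz$ vanish for all $n\neq 0$, including the finitely many $n<0$, so only $c_{f,\varrho_{\ell}}(0)$ survives and lands in the trace --- together with the one point your splitting leaves implicit: for the forms with $z_{Q}=\varrho_{\ell}$ the remainder $\eta_{\KM}(Q,z,v)-\sgn(Q_{z})\sqrt{|D|}/(2\pi Q(z,1))$ is not meromorphic but is a real-analytic multiple of $\overline{Q(z,1)}$, so it and all its iterated $R_{2}$-derivatives vanish at $\varrho_{\ell}$ and contribute nothing against the principal part of $f$, which is what makes the pole term close up into $R_{2,z}^{n-1}\widetilde{\Theta}_{\KM,D}^{*}$ with the $z_{Q}\neq\varrho$ exclusion of \eqref{eq theta preimage}.
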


\begin{remark}
	We state the Fourier expansion of the Kudla-Millson theta lift of meromorphic modular forms for congruence subgroups in Section~\ref{section general result}.
\end{remark}

\begin{proof}[Proof of Theorem~\ref{theorem fourier expansion}] We plug in the definition of the Kudla-Millson theta function \eqref{eq km theta function} and obtain the Fourier expansion
\begin{align*}
 \Phi_{\KM}(f,\tau)= \sum_{D \in \Z}c(D,v) e^{-2\pi i D\tau}
\end{align*}
with coefficients
\begin{align}\label{eq cDv}
c(D,v) = \left(\lim_{\varepsilon_{1},\dots,\varepsilon_{r} \to 0}\int_{\mathcal{F}^{*}\setminus \bigcup_{\ell=1}^{r}B_{\varepsilon_{\ell}}(\varrho_{\ell})}f(z) \sum_{Q \in \mathcal{Q}_{D}}\varphi_{\KM}(Q,z,v)  \frac{dx dy}{y^{2}}\right).
\end{align}
We now compute the coefficients $c(D,v)$ for fixed $D \in \Z$ and $v>0$.

\subsubsection*{The coefficients of index $D > 0$} In this case the function $\eta_{\KM}(Q,z,v)$ defined in \eqref{eq eta} is real-analytic in $z$ on all of $\H$. We use the differential equation \eqref{eq eta differential} and apply Stokes' Theorem (in the form given in Lemma~2.1 of \cite{bringmannkaneviazovska}) to obtain
\begin{align}\label{eq cDv D>0}
c(D,v) = \sum_{\ell=1}^{r}\lim_{\varepsilon_{\ell} \to 0}\int_{\partial (B_{\varepsilon_{\ell}}(\varrho_{\ell}) \cap \mathcal{F}^{*})}f(z)\sum_{Q \in \mathcal{Q}_{D}}\eta_{\KM}(Q,z,v) dz.
\end{align}
Here we also used that $f$ decays like a cusp form towards $\infty$ and that all other boundary integrals cancel out in $\Gamma$-equivalent pairs due to the modularity of the integrand. Using the disjoint union 
\[
B_{\varepsilon_{\ell}}(\varrho_{\ell}) = \bigcup_{\gamma \in \overline{\Gamma}_{\varrho_{\ell}}}\gamma(B_{\varepsilon_{\ell}}(\varrho_{\ell}) \cap \mathcal{F}^{*})
\]
we see that integrating over the full boundary $\partial B_{\varepsilon_{\ell}}(\varrho_{\ell})$ on the right-hand side of \eqref{eq cDv D>0} gives an additional factor $1/|\overline{\Gamma}_{\varrho_{\ell}}|$. For $D > 0$ the function $\sum_{Q \in \mathcal{Q}_{D}}\eta_{\KM}(Q,z,v)$ is real-analytic in $z$ on $\H$, so using Lemma~\ref{lemma residue theorem} we find
\begin{align*}
c(D,v) = -4\pi\sum_{\ell=1}^{r}\frac{1}{|\overline{\Gamma}_{\varrho_{\ell}}|}\sum_{n \geq 1}\frac{\imag(\varrho_{\ell})^{n}}{(n-1)!}c_{f,\varrho_{\ell}}(-n)R_{2,z}^{n-1}\left(\sum_{Q \in \mathcal{Q}_{D}}\eta_{\KM}(Q,z,v) \right)\bigg|_{z = \varrho_{\ell}}.
\end{align*}
This finishes the computation in the case $D > 0$.

\subsubsection*{The coefficients of index $D = 0$} We split off the summand for $Q = 0$ in \eqref{eq cDv}, which yields
\[
-\frac{1}{2\pi}\lim_{\varepsilon_{1},\dots,\varepsilon_{r} \to 0}\int_{\mathcal{F}^{*}\setminus \bigcup_{\ell=1}^{r}B_{\varepsilon_{\ell}}(\varrho_{\ell})}f(z)\frac{dx dy}{y^{2}} 
%= -\frac{1}{2\pi}\int_{\mathcal{F}^{*}}^{\reg}f(z)\frac{dxdy}{y^{2}} 
= 2\tr_{f}(0).
\]
The remaining part with $Q \neq 0$ can be computed as in the case $D > 0$.

\subsubsection*{The coefficients of index $D < 0$}

Let us first suppose that $f$ does not have a pole at any CM point of discriminant $D$. Let $[z_{1}],\dots,[z_{s}] \in \Gamma \backslash \H$ be the $\Gamma$-classes of CM points of discriminant $D$, and let $Q_{1},\dots,Q_{s} \in \mathcal{Q}_{D}^{+}$ be the corresponding quadratic forms. We can assume that $z_{1},\dots,z_{s} \in \mathcal{F}^{*}$ and that every $z_{\ell}$ lies in the interior of $\overline{\Gamma}_{z_{\ell}}\mathcal{F}^{*}$. We cut out a ball $B_{\delta_{\ell}}(z_{\ell})$ around every CM point $z_{\ell}$ and then apply Stokes' Theorem as in the case $D > 0$ to obtain
\begin{align}\label{eq cDv D<0}
\begin{split}
c(D,v) &= \sum_{\ell=1}^{r}\frac{1}{|\overline{\Gamma}_{\varrho_{\ell}}|}\lim_{\varepsilon_{\ell} \to 0}\int_{\partial B_{\varepsilon_{\ell}}(\varrho_{\ell})}f(z)\sum_{Q \in \mathcal{Q}_{D}}\eta_{\KM}(Q,z,v) dz \\
&\quad + \sum_{\ell = 1}^{s}\frac{1}{|\overline{\Gamma}_{z_{\ell}}|}\lim_{\delta_{\ell} \to 0}\int_{\partial B_{\delta_{\ell}}(z_{\ell})}f(z)\sum_{Q \in \mathcal{Q}_{D}}\eta_{\KM}(Q,z,v) dz.
\end{split}
\end{align}
The first line can be evaluated as in the case $D > 0$. In the second line, for fixed $\ell$ the summands with $Q \neq \pm Q_{\ell}$ are real-analytic near $z_{\ell}$, so their integrals vanish as $\delta_{\ell} \to 0$. Replacing $-Q_{\ell}$ by $Q_{\ell}$ gives a factor $2$. Further, since $\eta_{\KM}(Q,z,v)-\sgn(Q_{z})\sqrt{|D|}/2\pi Q(z,1)$ is real analytic near the CM point $z_{Q}$, the second line in \eqref{eq cDv D<0} is given by
\[
2\sum_{\ell = 1}^{s}\frac{1}{|\overline{\Gamma}_{z_{\ell}}|}\lim_{\delta_{\ell} \to 0}\int_{\partial B_{\delta_{\ell}}(z_{\ell})}f(z)\frac{\sgn((Q_{\ell})_{z})\sqrt{|D|}}{2\pi Q_{\ell}(z,1)}dz.
\]
Note that $\sgn((Q_{\ell})_{z}) = 1$ for $z$ close to $z_{\ell}$. If we now plug in the elliptic expansion \eqref{eq elliptic expansion} of $f$ and the expression \eqref{eq Qz1 elliptic variable} for $Q(z,1)$, we arrive at
\begin{align}\label{eq tr}
2\sum_{\ell = 1}^{s}\frac{1}{|\overline{\Gamma}_{z_{\ell}}|}\sum_{n \geq 0}\frac{\imag(z_{\ell})}{\pi}c_{f,z_{\ell}}(n)\lim_{\delta_{\ell} \to 0}\int_{\partial B_{\delta_{\ell}}(z_{\ell})}\frac{(z-z_{\ell})^{n-1}}{(z-\overline{z}_{\ell})^{n+1}}dz.
\end{align}
By the residue theorem, the last integral vanishes unless $n = 0$, in which case it equals $\pi/\imag(z_{\ell})$. Hence the second line in \eqref{eq cDv D<0} is given by 
\[
2\sum_{\ell = 1}^{s}\frac{c_{f,z_{\ell}}(0)}{|\overline{\Gamma}_{z_{\ell}}|} = 2\tr_{f}(D),
\]
which finishes the computation for $D < 0$ if $f$ does not have a pole at any CM point of discriminant $D$.

We now indicate the changes in the computation if $f$ has poles at some CM points of discriminant $D$. For notational convenience, let us assume that $\varrho_{1} = z_{1}$ is a pole of $f$ which is also a CM point. In this case we do not need to cut out an additional $\delta_{1}$-ball around $z_{1}$ before applying Stokes' Theorem since we already cut out an $\varepsilon_{1}$-ball around $\varrho_{1}$. In particular, the summand for $z_{1}$ in the second line of \eqref{eq cDv D<0} has to be omitted, and the summand for $\varrho_{1}$ in the first line of \eqref{eq cDv D<0} has to be computed as follows. We write
\begin{align}\label{eq with poles}
\begin{split}
&\lim_{\varepsilon_{1} \to 0}\int_{\partial B_{\varepsilon_{1}}(\varrho_{1})}f(z)\sum_{Q \in \mathcal{Q}_{D}}\eta_{\KM}(Q,z,v) dz \\
&\quad= 2\lim_{\varepsilon_{1} \to 0}\int_{\partial B_{\varepsilon_{1}}(\varrho_{1})}f(z)\frac{\sqrt{|D|}}{2\pi Q_{1}(z,1)} dz \\
&\qquad + \lim_{\varepsilon_{1} \to 0}\int_{\partial B_{\varepsilon_{1}}(\varrho_{1})}f(z)\sum_{Q \in \mathcal{Q}_{D}}\left(\eta_{\KM}(Q,z,v)-\delta_{\varrho_{1} = z_{Q}}\frac{\sgn(Q_{z})\sqrt{|D|}}{2\pi Q(z,1)}\right) dz.
\end{split}
\end{align}
The factor $2$ in the second line comes from the fact that $Q_{1}$ and $-Q_{1}$ have the same CM point $z_{1}$, and the sign is gone since $\sgn((Q_{1})_{z}) = 1$ for $z$ close to $z_{1}$. 

The sum over $Q \in \mathcal{Q}_{D}$ in the third line of \eqref{eq with poles} is real-analytic near $\varrho_{1}$, hence the expression in the third line can be computed as in the case $D > 0$ to
\begin{align*}
-4\pi\sum_{n \geq 1}\frac{\imag(\varrho_{1})^{n}}{(n-1)!}c_{f,\varrho_{1}}(-n)R_{2,z}^{n-1}\left(\sum_{Q \in \mathcal{Q}_{D}}\left(\eta_{\KM}(Q,z,v)-\delta_{\varrho_{1} = z_{Q}}\frac{\sgn(Q_{z})\sqrt{|D|}}{2\pi Q(z,1)} \right) \right)\bigg|_{z = \varrho_{1}}.
\end{align*}
For $\varrho_{1} = z_{Q}$ we have
\[
R_{2,z}^{n-1}\left(\eta_{\KM}(Q,z,v)-\delta_{\varrho_{1} = z_{Q}}\frac{\sgn(Q_{z})\sqrt{|D|}}{2\pi Q(z,1)} \right)\bigg|_{z = \varrho_{1}} = 0
\]
for all $n \geq 1$, which follows from the fact that the difference in the brackets can be written as a real-analytic multiple of $\overline{Q(z,1)}$. Therefore we can just omit the summands for $\varrho_{1} = z_{Q}$.

In the second line in \eqref{eq with poles}, we plug in the elliptic expansion of $f$ and obtain
\begin{align*}
2\lim_{\varepsilon_{1} \to 0}\int_{\partial B_{\varepsilon_{1}}(\varrho_{1})}f(z)\frac{\sqrt{|D|}}{2\pi Q_{1}(z,1)} dz = 2\sum_{n \gg -\infty}\frac{\imag(\varrho_{1})}{\pi}c_{f,\varrho_{1}}(n)\lim_{\varepsilon_{1} \to 0}\int_{\partial B_{\varepsilon_{1}}(\varrho_{1})}\frac{(z-\varrho_{1})^{n-1}}{(z-\overline{\varrho}_{1})^{n+1}}dz.
\end{align*}
Note that, in comparison to the expression in \eqref{eq tr}, the sum now has finitely many terms with negative $n$. However, using the residue theorem one can check that it is still true that the last integral vanishes unless $n = 0$, in which case it equals $\pi/\imag(\varrho_{1})$. Hence the last displayed formula becomes $2c_{f,\varrho_{1}}(0)$, which contributes to the trace of index $D$. 

We can proceed in the same way for every pole of $f$ which is also a CM point of discriminant $D$. The proof is finished.
\end{proof}

From the Fourier expansion of the Kudla-Millson theta lift and \eqref{eq theta lowering} we immediately obtain its image under the lowering operator.

\begin{corollary}\label{corollary shadow}
	The image under the lowering operator $L_{3/2,\tau}$ of the Kudla-Millson theta lift of $f \in \mathbb{S}_{0}$ is given by
	\[
	L_{3/2,\tau}\Phi_{\KM}(f,\tau) = -4\pi\sum_{\ell = 1}^{r}\frac{1}{|\overline{\Gamma}_{\varrho_{\ell}}|}\sum_{n\geq 1}\frac{\imag(\varrho_{\ell})^{n}}{(n-1)!}c_{f,\varrho_{\ell}}(-n)R_{2,z}^{n-1}\Theta_{\KM}^{*}(\varrho_{\ell},\tau).
	\]
\end{corollary}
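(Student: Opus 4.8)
The plan is to apply the lowering operator directly to the Fourier expansion obtained in Theorem~\ref{theorem fourier expansion}, term by term in the Fourier variable, exploiting two facts: that $L_{3/2,\tau} = -2iv^{2}\frac{\partial}{\partial\overline{\tau}}$ annihilates functions that are holomorphic in $\tau$, and the termwise identity \eqref{eq theta lowering}. Writing $\Phi_{\KM}(f,\tau) = \sum_{D\in\Z}c(D,v)q^{-D}$ with the coefficients $c(D,v)$ supplied by the theorem, I would separate each coefficient into its constant \emph{holomorphic} piece $2\tr_f(D)$ and its \emph{non-holomorphic} piece assembled from the $R_{2,z}^{n-1}\widetilde{\Theta}_{\KM,D}^{*}(\varrho_{\ell},v)$.

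For the holomorphic piece, the summand $2\tr_f(D)q^{-D} = 2\tr_f(D)e^{-2\pi iD\tau}$ is holomorphic in $\tau$, since $\tr_f(D)$ does not depend on $\tau$; hence $\frac{\partial}{\partial\overline{\tau}}$, and therefore $L_{3/2,\tau}$, kills it, so the entire contribution of the traces disappears. For the non-holomorphic piece I would apply $L_{3/2,\tau}$ to each summand $R_{2,z}^{n-1}\widetilde{\Theta}_{\KM,D}^{*}(\varrho_{\ell},v)e^{-2\pi iD\tau}$ and invoke \eqref{eq theta lowering} — which in turn rests on the pointwise relation \eqref{eq eta differential siegel} — to replace it by $R_{2,z}^{n-1}\Theta_{\KM,D}^{*}(\varrho_{\ell},v)e^{-2\pi iD\tau}$. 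Resumming over $D\in\Z$ then collapses these coefficients into $R_{2,z}^{n-1}\Theta_{\KM}^{*}(\varrho_{\ell},\tau)$, by the very definition of $R_{2,z}^{n-1}\Theta_{\KM,D}^{*}(\varrho_{\ell},v)$ as its coefficient at $e^{-2\pi iD\tau}$, and carrying along the prefactors $-4\pi\,|\overline{\Gamma}_{\varrho_{\ell}}|^{-1}\,\imag(\varrho_{\ell})^{n}/(n-1)!\,c_{f,\varrho_{\ell}}(-n)$ yields exactly the asserted formula.

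The only step requiring genuine care — and the one I expect to be the main obstacle — is the justification of differentiating the Fourier series term by term, i.e. interchanging $L_{3/2,\tau}$ with the sum over $D$. This is legitimate once one knows that the series and its termwise $\tau$-derivatives converge locally uniformly in $\tau$, which follows from the convergence properties and the square-exponential decay of the Kudla-Millson theta function already used to establish the Fourier expansion. I would also record the elementary but necessary observation that the raising operator $R_{2,z}^{n-1}$ acts only on the variable $z$ (through $z$ and $y = \imag(z)$, before evaluation at $\varrho_{\ell}$), whereas $L_{3/2,\tau}$ acts only on $\tau = u+iv$; since they differentiate independent variables they commute, which is precisely what permits \eqref{eq theta lowering} to be applied after the raising operators have been taken.
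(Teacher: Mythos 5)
Your proposal is correct and follows essentially the same route as the paper, which deduces the corollary immediately from the Fourier expansion in Theorem~\ref{theorem fourier expansion} together with the termwise identity \eqref{eq theta lowering}; your additional remarks on termwise differentiation and on the commutation of $R_{2,z}^{n-1}$ with $L_{3/2,\tau}$ merely make explicit what the paper leaves implicit.
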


\section{The proof of Theorem \ref{theorem main result}}
\label{section proof main result}

We now prove Theorem~\ref{theorem main result}. Let $\rho = e^{\pi i /3}$. By Theorem~\ref{theorem fourier expansion} the Fourier expansion of the Kudla-Millson theta lift of $1/j$ is given by 
\begin{align*}
	\Phi_{\KM}(1/j,\tau) = \sum_{D \in \Z}\left(2\tr_{1/j}(D) -4\pi \frac{1}{|\overline{\Gamma}_{\rho}|}\sum_{n\geq 1}\frac{\imag(\rho)^{n}}{(n-1)!}c_{1/j,\rho}(-n)R_{2,z}^{n-1}\widetilde{\Theta}_{\KM,D}^{*}(\rho,v)\right)q^{-D}.
	\end{align*}
	
	We first consider the growth of the traces of reciprocal singular moduli.
	
	\begin{proposition}\label{proposition polynomial growth}
	The traces $\tr_{1/j}(D)$ are of polynomial growth in $|D|$. In particular, the generating series of traces of reciprocal singular moduli converges absolutely and locally uniformly.
\end{proposition}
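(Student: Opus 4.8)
\section*{Proof proposal}

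The plan is to bound $\tr_{1/j}(D)$ by combining a crude bound on the number of $\Gamma$-classes of forms of discriminant $D$ with a uniform bound on the individual constant terms $c_{1/j,z_{Q}}(0)$. Since $1/j$ is $\Gamma$-invariant, for each class I may work with the reduced representative, whose CM point $z_{Q}$ lies in the standard fundamental domain $\mathcal{F}$. Counting reduced forms $Q = [a,b,c]$ with $|b| \leq a \leq \sqrt{|D|/3}$ gives the trivial bound $|\mathcal{Q}_{D}^{+}/\Gamma| \ll |D|$ for the class number. It therefore suffices to show that $|c_{1/j,z_{Q}}(0)| \ll |D|^{3/2}$ uniformly over the CM points $z_{Q}$ of discriminant $D$, since then $|\tr_{1/j}(D)| \ll |D|^{5/2}$.

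To bound the individual values, recall that $1/j$ has its only poles modulo $\Gamma$ at the two corners $\rho = e^{\pi i/3}$ and $\rho - 1 = e^{2\pi i/3}$ of $\mathcal{F}$, where $j$ vanishes to order three, so that $1/j(z) \ll |z-\rho|^{-3}$ near $\rho$ (and analogously near $\rho-1$), while $1/j$ is bounded on the part of $\mathcal{F}$ bounded away from these two points and decays towards $\infty$. When $z_{Q}$ equals $\rho$ or $\rho-1$ the relevant constant term $c_{1/j,\rho}(0)$ is a fixed constant independent of $D$, so these terms are harmless. For $z_{Q} \neq \rho,\rho-1$ we have $c_{1/j,z_{Q}}(0) = 1/j(z_{Q})$, and the crucial input is a lower bound for the distance of $z_{Q}$ to the two corners.

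The main step is thus the elementary estimate
\[
\min\bigl(|z_{Q} - \rho|,\, |z_{Q}-(\rho-1)|\bigr) \gg |D|^{-1/2}
\]
for every reduced CM point $z_{Q}$ of discriminant $D$ with $z_{Q} \neq \rho, \rho-1$. Writing $z_{Q} = (-b + i\sqrt{|D|})/(2a)$ with $a \leq \sqrt{|D|/3}$, I would split into cases according to the value of $b+a$: if $b \neq -a$ then the real parts of $z_{Q}$ and $\rho$ differ by at least $1/(2a) \gg |D|^{-1/2}$; if $b = -a$ then $z_{Q} = \rho$ would force $c = a$, so that $c \geq a+1$ makes $\sqrt{|D|} - a\sqrt{3}$ bounded below by a constant and hence the imaginary parts differ by $\gg 1/a \gg |D|^{-1/2}$. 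The corner $\rho-1$ is treated identically with $b = a$. Substituting this into $1/j(z) \ll |z-\rho|^{-3}$ yields $|1/j(z_{Q})| \ll |D|^{3/2}$, and combining with the class number bound gives $|\tr_{1/j}(D)| \ll |D|^{5/2}$.

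Finally, polynomial growth immediately gives the convergence statement: since $|q^{-D}| = e^{-2\pi v |D|}$ for $D < 0$ and $\tau = u+iv \in \H$, the series $\sum_{D \leq 0}\tr_{1/j}(D)q^{-D}$ is dominated by $\sum_{D} |D|^{5/2} e^{-2\pi v|D|}$, which converges absolutely and locally uniformly on $\H$. I expect the distance estimate to be the only genuinely delicate point, since it is precisely the possible clustering of CM points near the elliptic fixed points that could a priori spoil polynomial growth; the class number count and the boundedness of $1/j$ away from the corners are routine.
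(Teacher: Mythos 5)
Your proof is correct and takes essentially the same route as the paper's: bound the number of classes polynomially and reduce the pointwise estimate of $|1/j(z_Q)|$ to an elementary lower bound on the distance from reduced CM points to the elliptic fixed point(s), proved by a case distinction on whether $b=\mp a$. The paper works in the rectangle $0\le x\le 1$, $y\ge 1/2$ (so only the corner $\rho$ occurs) and settles for the slightly weaker bound $|z_Q-\rho|\gg |D|^{-1}$ by using $(3a^2-|D|)^2\ge 1$, but the substance of the argument is identical.
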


\begin{proof}
	We start with the simple estimate
	\[
	|\tr_{1/j}(D)| \leq H(D)\max_{Q \in \mathcal{Q}_{D}^{+}/\Gamma}|c_{1/j,z_Q}(0)|,
	\]
	where $H(D) = \sum_{Q \in \mathcal{Q}_{D}^{+}/\Gamma}1/|\overline{\Gamma}_{Q}|$ is the $D$-th Hurwitz class number. It follows from Dirichlet's class number formula that $H(D)$ is of polynomial growth in $|D|$. Since the value $|c_{1/j,\rho}(0)|$ does not grow with $|D|$, it remains to estimate $|c_{1/j,z_Q}(0)| = |1/j(z_Q)|$ for CM points $z_Q \in \mathcal{Q}_{D}^{+}$ with $z_Q \neq \rho$ in terms of $|D|$. We can assume that all the CM points $z_Q$ lie inside the rectangle $\{z \in \H: 0 \leq x \leq 1, y \geq 1/2\}$, which contains $\rho$ in its interior but does not contain any of the other $\Gamma$-translates of $\rho$. If we cut out a small $\varepsilon$-ball around $\rho$ from this rectangle, then $1/j$ is bounded on the remaining set since it has no other poles there and decays like a cusp form towards $\infty$. In particular, it suffices to estimate $|1/j(z_Q)|$ on the $\varepsilon$-ball around $\rho$. If we look at the Taylor expansion $1/j(z) = \sum_{n \gg -\infty}\alpha_{1/j,\varrho}(n)(\rho-z)^n$ of $1/j$ on this $\varepsilon$-ball, we see that it suffices to estimate $|\rho-z_Q|^{-1}$ in terms of $|D|$. We can write
	\[
	\rho = \frac{1}{2}+i\frac{\sqrt{3}}{2}, \qquad z_Q = -\frac{b}{2a}+i\frac{\sqrt{|D|}}{2a},
	\]
	with $a,b,c \in \Z$. Then we have
	\[
	\frac{1}{|\rho-z_Q|^2} =  \frac{4a^2}{(a+b)^2 + (\sqrt{3}a-\sqrt{|D|})^2}.
	\]
	Now there are two cases. If $(a+b)^2 \neq 0$, then it is a least $1$, and the whole denominator is at least $1$. Furthermore, the assumption that $\imag(z_Q) \geq 1/2$ implies $a \leq \sqrt{|D|}$. Hence, in this case we get
	\[
	\frac{1}{|\rho-z_Q|^2} \leq 4a^2 \leq 4|D|.
	\]
	If, on the other hand, we have $(a+b)^2 = 0$, then
	\[
	\frac{1}{|\rho-z_Q|^2} = \frac{4a^2}{(\sqrt{3}a-\sqrt{|D|})^2} = \frac{4a^2(\sqrt{3}a+\sqrt{|D|})^2}{(3a^2-|D|)^2}.
	\]
	Since the denominator is not $0$ (as we assumed $z_Q \neq \rho$), it is at least $1$, so we find
	\[
	\frac{1}{|\rho-z_Q|^2} \leq 4a^2(\sqrt{3}a+\sqrt{|D|})^2 \leq 4(\sqrt{3}+1)^2|D|^2.
	\]
	In any case, we see that $|\rho-z_Q|^{-1}$ is bounded by a polynomial in $|D|$, which finishes the proof.
\end{proof}
%
%\begin{remark}
%	The same proof works for meromorphic modular forms $f \in \mathbb{S}_0$ which have poles only at CM points. However, it seems that the difference $|\varrho-z_Q|^{-1}$ for an arbitrary $\varrho \in \H$ and a CM point $z_Q $ is difficult to bound in terms of the discriminant of $z_Q$, so the above proof does not seem to work for arbitrary $f \in \mathbb{S}_0$.
%\end{remark}

We now compute the coefficients $c_{1/j,\rho}(-n)$ for $n\geq 1$ in the elliptic expansion of $1/j$ at $\rho$. They can be explicitly described in terms of the Chowla-Selberg period
\begin{align}\label{eq Chowla Selberg}
	\Omega = \Omega_{\Q(\sqrt{-3})} =  \frac{1}{\sqrt{6\pi}}\left(\frac{\Gamma(\frac{1}{3})}{\Gamma(\frac{2}{3})} \right)^{\frac{3}{2}} \approx 0.6409273802
	\end{align}
of $\Q(\sqrt{-3})$ (compare Section~6.3 in Zagier's part of \cite{brdgza08}).

\begin{proposition}\label{proposition 1/j expansion}
	The function $1/j$ has an elliptic expansion at $\rho$ of the form
	\[
	-\frac{\pi^{-3}\Omega^{-6}}{2^{12}\cdot 3^{3}}X_{\rho}^{-3}(z) + \frac{23}{2^{12}\cdot 3^{3}} + O(X_{\rho}(z)).
	\]
\end{proposition}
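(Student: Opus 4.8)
The plan is to combine the factorization $1/j = \Delta/E_4^3$ with the order-$3$ symmetry at $\rho$. First I would record that the stabilizer $\overline{\Gamma}_{\rho}$ is cyclic of order $3$: if $\gamma$ generates it, then, since $X_{\rho}$ conjugates the elliptic element $\gamma$ to a rotation of the unit disc of order $3$, one has $X_{\rho}(\gamma z) = \zeta X_{\rho}(z)$ for a primitive cube root of unity $\zeta$. Substituting this into the invariance $1/j(\gamma z) = 1/j(z)$ and comparing elliptic expansions forces $c_{1/j,\rho}(n) = 0$ unless $3 \mid n$. Since $E_4$ has a simple zero at $\rho$ and $\Delta(\rho) \neq 0$, the function $j$ has a triple zero and $1/j$ a pole of order exactly $3$, so the expansion has the shape $c_{1/j,\rho}(-3)X_{\rho}^{-3} + c_{1/j,\rho}(0) + O(X_{\rho}^{3})$. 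In particular $c_{1/j,\rho}(-2) = c_{1/j,\rho}(-1) = 0$ and the remainder is $O(X_{\rho}(z))$; it remains to pin down $c_{1/j,\rho}(-3)$ and $c_{1/j,\rho}(0)$.

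Next I would compute the Laurent expansion of $\Delta/E_4^{3}$ in the holomorphic parameter $t = 2\pi i(z-\rho)$ and pass to $X_{\rho}$ via the exact substitution $X_{\rho} = t/(t - 2\pi\sqrt{3})$, which follows from $z - \overline{\rho} \to i\sqrt{3}$ as $z\to\rho$. Because $X_{\rho}^{-3} = (1 - 2\pi\sqrt{3}/t)^{3}$ is a Laurent polynomial with constant term $1$, while $X_{\rho}^{3k} = O(t^{3})$ for $k\ge 1$, the coordinate change produces the two clean identities
\[
[t^{-3}](1/j) = -(2\pi\sqrt{3})^{3}\,c_{1/j,\rho}(-3), \qquad [t^{0}](1/j) = c_{1/j,\rho}(-3) + c_{1/j,\rho}(0),
\]
so that $c_{1/j,\rho}(-3)$ is read off from the leading Laurent coefficient and $c_{1/j,\rho}(0) = [t^{0}](1/j) - c_{1/j,\rho}(-3)$. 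Here I would input the classical CM evaluations coming from the Chowla--Selberg formula (Section~6.3 of Zagier's part of \cite{brdgza08}), namely $E_6(\rho) = 24\sqrt{3}\,\Omega^{6}$, $\Delta(\rho) = -\Omega^{12} = -E_6(\rho)^{2}/1728$, together with $E_2^{*}(\rho) = 0$, i.e. $E_2(\rho) = \tfrac{2\sqrt{3}}{\pi}$.

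The $t$-Laurent coefficients of $\Delta/E_4^{3}$ through constant order are determined by the Taylor data $D^{n}E_4(\rho)$ for $n\le 4$ and $D^{n}\Delta(\rho)$ for $n\le 3$, where $D = \tfrac{1}{2\pi i}\tfrac{d}{dz}$, which I would extract from Ramanujan's identities $DE_2 = \tfrac{E_2^{2}-E_4}{12}$, $DE_4 = \tfrac{E_2E_4-E_6}{3}$, $DE_6 = \tfrac{E_2E_6-E_4^{2}}{2}$, $D\Delta = E_2\Delta$, repeatedly discarding terms that carry an undifferentiated factor $E_4$ (which vanishes at $\rho$). The leading term gives $DE_4(\rho) = -E_6(\rho)/3$, hence $[t^{-3}](1/j) = \Delta(\rho)/DE_4(\rho)^{3}$, and inserting the CM values yields $c_{1/j,\rho}(-3) = -\pi^{-3}\Omega^{-6}/(2^{12}\cdot 3^{3})$. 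The main obstacle is the constant term: expanding numerator and denominator to order $t^{3}$ is a lengthy but mechanical bookkeeping, and it produces $[t^{0}](1/j) = \tfrac{23}{2^{12}\cdot 3^{3}} - \tfrac{E_2(\rho)^{3}}{2^{12}\cdot 3^{3}\,E_6(\rho)}$. The delicate point, which I would verify carefully as the one genuinely error-prone step, is that the second, transcendental-looking summand (which carries the powers of $\pi$ through $E_2(\rho) = 2\sqrt{3}/\pi$) is exactly equal to $c_{1/j,\rho}(-3)$; it therefore cancels in $c_{1/j,\rho}(0) = [t^{0}](1/j) - c_{1/j,\rho}(-3)$, leaving the rational value $\tfrac{23}{2^{12}\cdot 3^{3}}$. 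It is the vanishing $E_2^{*}(\rho) = 0$ that makes these non-rational parts line up, and I expect the only real work to be tracking the integer $23$ through the order-$6$ expansion.
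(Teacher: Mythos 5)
Your proof is correct --- I verified that the order-$6$ expansion does yield $[t^{0}](1/j)=\frac{23}{2^{12}\cdot 3^{3}}-\frac{E_{2}(\rho)^{3}}{2^{12}\cdot 3^{3}E_{6}(\rho)}$ and that the second summand equals $c_{1/j,\rho}(-3)$ --- but it runs differently from the paper's. Both arguments ultimately feed on the same CM data at $\rho$ (Section~6.3 of Zagier's part of \cite{brdgza08}, i.e.\ the Chowla--Selberg formula). The paper expands $j$ itself in the coordinate $X_{\rho}$ via Proposition~17 of loc.\ cit., so the needed Taylor data are the raising-operator values $R_{0}^{3}j(\rho)$ and $R_{0}^{6}j(\rho)$, computed by writing $R_{0}^{n}j\cdot\Delta$ in terms of $E_{2}^{*},E_{4},E_{6}$; the intermediate coefficients vanish because forms of weight not divisible by $6$ vanish at $\rho$ (the same fact as your order-$3$ rotation argument), and the series is then inverted. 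You instead expand $1/j=\Delta/E_{4}^{3}$ directly in the holomorphic coordinate $t$ using Ramanujan's identities and pass to $X_{\rho}$ only at the end. Your version avoids the inversion step and makes the rationality of the constant term transparent: the only non-algebraic input besides $\Omega$ is $E_{2}(\rho)=2\sqrt{3}/\pi$, and one sees it cancel against $c_{1/j,\rho}(-3)$, whereas in the paper this is hidden in the assertion that $R_{0}^{6}j(\rho)$ is a rational multiple of $\pi^{6}\Omega^{12}$. The costs are comparable: your degree-$6$ differentiation of $\Delta/E_{4}^{3}$ is the same amount of bookkeeping the paper buries in the evaluation of $R_{0}^{6}j(\rho)$, and both proofs leave that to the reader (the stated output of yours is correct). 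One small point: the substitution $X_{\rho}=t/(t-2\pi\sqrt{3})$ is an exact identity coming from $\rho-\overline{\rho}=i\sqrt{3}$, not merely a limiting statement, and you need it exactly for your two coefficient identities to hold.
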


\begin{proof}
	The $j$-function has an elliptic expansion at $\rho$ of the form
	\[
	j(z) = \sum_{n=3}^{\infty}R_{0}^{n}j(\rho)\frac{\imag(\rho)^{n}}{n!}X_{\rho}^{n}(z),
	\]
	compare Proposition~17 in Zagier's part of \cite{brdgza08}. Using the theory of complex multiplication one can show that the values $R_{0}^{n}j(\rho)$ are algebraic multiples of $\pi^{n}\Omega^{2n}$ (compare Proposition~26 in Zagier's part of \cite{brdgza08}). Explicitly, we have
	\begin{align*}
	R_{0}^{3}j(\rho) &= -2^{16}\cdot 3^{2}\cdot\sqrt{3}\cdot \pi^{3}\Omega^{6},\\
%	-8i \, j^{(3)}(\rho) = 
	R_{0}^{4}j(\rho) &= R_{0}^{5}j(\rho) = 0,  \\
%	R_{0}^{4}j(\rho) &= 16 j^{(4)}(\rho)-\frac{96i}{\imag(\rho)}j^{(3)}(\rho) = 0, \\
%	R_{0}^{5}j(\rho) &= 32i j^{(5)}(\rho)+\frac{320}{\imag(\rho)}j^{(4)}(\rho)-\frac{960i}{\imag(\rho)^{2}}j^{(3)}(\rho) = 0\\
	R_{0}^{6}j(\rho) &= -2^{22} \cdot 3^2 \cdot 5 \cdot 23 \cdot \pi^{6}\Omega^{12}.
	\end{align*}
	Note that $R_{0}^{4}j(\rho) = R_{0}^{5}j(\rho) = 0$ follows from the simple fact that every function which transforms like a modular form of weight $k \not\equiv 0 \pmod 6$ for $\Gamma$ vanishes at $\rho$. The other values can be computed, for example, by writing the almost holomorphic modular forms $R_{0}^{3}j(z)\Delta(z)$ and $R_{0}^{6}j(z)\Delta(z)$ in terms of the Eisenstein series $E_{2}^{*},E_{4}$ and $E_{6}$ and using their values at $\rho$ given in the table after Proposition 27 in Zagier's part of \cite{brdgza08}. We also checked the above evaluations numerically.
	
	By inverting the elliptic expansion of $j$ we obtain the elliptic expansion of $1/j$ at $\rho$.
\end{proof}

%\begin{align*}
%E_{2}(\rho) = \frac{2\sqrt{3}}{\pi}, \qquad E_{4}(\rho) = 0, \qquad E_{6}(\rho) = 24\sqrt{3}\cdot\Omega_{\Q(\sqrt{-3})}^{6}\sim 2.881541100790945623,
%\end{align*}

%In particular, we can now evaluate $\tr_{1/j}(-3) = \frac{1}{3}c_{1/j,\rho}(0)$.

\begin{corollary}\label{corollary trace -3}
	We have 
	\[
	\tr_{1/j}(-3) = \frac{23}{2^{12}\cdot 3^{4}} = \frac{23}{331776}.
	\]
\end{corollary}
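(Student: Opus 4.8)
The plan is to compute $\tr_{1/j}(-3)$ directly from its definition as a sum over $\Gamma$-classes of positive definite forms of discriminant $-3$, using the elliptic expansion of $1/j$ at $\rho$ provided by Proposition~\ref{proposition 1/j expansion}. First I would observe that there is only one $\Gamma$-equivalence class of positive definite binary quadratic forms of discriminant $-3$, represented by the form $Q = [1,1,1]$, whose associated CM point is exactly $z_Q = \rho = e^{\pi i/3}$. Thus the sum defining $\tr_{1/j}(-3) = \sum_{Q \in \mathcal{Q}_{-3}^{+}/\Gamma} c_{1/j,z_Q}(0)/|\overline{\Gamma}_{Q}|$ collapses to a single term.

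The two ingredients I then need are the constant term $c_{1/j,\rho}(0)$ of the elliptic expansion of $1/j$ at $\rho$, and the order $|\overline{\Gamma}_{\rho}|$ of the stabilizer of $\rho$ in $\overline{\Gamma} = \PSL_2(\Z)$. Proposition~\ref{proposition 1/j expansion} already records that the constant term in the elliptic expansion of $1/j$ at $\rho$ equals $c_{1/j,\rho}(0) = \frac{23}{2^{12}\cdot 3^{3}}$ (the $X_{\rho}^{-3}$ term does not contribute to the constant coefficient, and by the definition of $\tr_f(D)$ in Section~\ref{section traces} it is precisely this constant coefficient that enters the trace). The stabilizer of $\rho$ in $\overline{\Gamma}$ is cyclic of order $3$, so $|\overline{\Gamma}_{\rho}| = 3$. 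Combining these gives
\[
\tr_{1/j}(-3) = \frac{c_{1/j,\rho}(0)}{|\overline{\Gamma}_{\rho}|} = \frac{1}{3}\cdot\frac{23}{2^{12}\cdot 3^{3}} = \frac{23}{2^{12}\cdot 3^{4}} = \frac{23}{331776},
\]
which is the claimed value.

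There is no serious obstacle here, since all the analytic work has been done in Proposition~\ref{proposition 1/j expansion}; the corollary is essentially a bookkeeping consequence. The only point requiring a small amount of care is keeping the normalizations straight—namely, confirming that $\mathcal{Q}_{-3}^{+}/\Gamma$ is a single class with CM point $\rho$, and that the trace is weighted by $1/|\overline{\Gamma}_{\rho}| = 1/3$ rather than by the order of the stabilizer in $\SL_2(\Z)$. I would double-check the factor of $3$ in the denominator precisely because $\rho$ is an elliptic fixed point of order $3$, which is exactly the source of the extra power of $3$ distinguishing $2^{12}\cdot 3^{4}$ from the $2^{12}\cdot 3^{3}$ appearing in the elliptic expansion.
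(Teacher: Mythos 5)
Your proposal is correct and is exactly the paper's argument: the paper likewise notes that $\tr_{1/j}(-3) = c_{1/j,\rho}(0)/3$ by definition (the single class $[1,1,1]$ with CM point $\rho$ and stabilizer of order $3$ in $\overline{\Gamma}$) and then reads off the constant term from Proposition~\ref{proposition 1/j expansion}. Your extra care about the normalization $1/|\overline{\Gamma}_{\rho}| = 1/3$ is precisely the point, and your bookkeeping is right.
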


\begin{proof}
	By definition $\tr_{1/j}(-3) = \frac{c_{1/j,\rho}(0)}{3}$, so the result follows from Proposition~\ref{proposition 1/j expansion}.
\end{proof}

%We also obtain an explicit evaluation of the average value of $1/j$ over $\Gamma \backslash \H$.

\begin{corollary}\label{corollary average value 1/j}
	The trace of index $0$ of $1/j$ is given by
	\[
	\tr_{1/j}(0) = - \frac{1}{2^{11}\cdot 3^{4}}=  -\frac{1}{165888}.
	\]
\end{corollary}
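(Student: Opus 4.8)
The plan is to read off $\tr_{1/j}(0)$ directly from Lemma~\ref{lemma average value evaluation}. The function $1/j$ has a single pole modulo $\Gamma$, namely $\varrho_1 = \rho$ with $|\overline{\Gamma}_\rho| = 3$, so the double sum in Lemma~\ref{lemma average value evaluation} collapses to one sum over $n \geq 1$. By Proposition~\ref{proposition 1/j expansion} the principal part of the elliptic expansion of $1/j$ at $\rho$ consists of the single term in $X_\rho^{-3}$, with $c_{1/j,\rho}(-3) = -\frac{\pi^{-3}\Omega^{-6}}{2^{12}\cdot 3^3}$ and $c_{1/j,\rho}(-1) = c_{1/j,\rho}(-2) = 0$. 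Hence only $n = 3$ contributes, and Lemma~\ref{lemma average value evaluation} reduces to
\[
\tr_{1/j}(0) = \frac{\pi}{3}\cdot\frac{1}{3}\cdot\frac{\imag(\rho)^3}{2!}\,c_{1/j,\rho}(-3)\,R_2^2 E_2^*(\rho).
\]
It therefore remains only to evaluate the single number $R_2^2 E_2^*(\rho)$.

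To compute this, I would first write the twofold raising $R_2^2 E_2^* = R_4 R_2 E_2^*$ as an almost holomorphic modular form of weight $6$. Using $R_\kappa = 2i\frac{\partial}{\partial z} + \kappa y^{-1}$, the relation $E_2 = E_2^* + \frac{3}{\pi y}$, and Ramanujan's differential equations for $E_2, E_4, E_6$, a direct calculation gives $R_2 E_2^* = -\frac{\pi}{3}\left((E_2^*)^2 - E_4\right)$ and then
\[
R_2^2 E_2^* = \frac{2\pi^2}{9}\left((E_2^*)^3 - 3 E_2^* E_4 + 2 E_6\right).
\]
Now $\rho$ is an elliptic fixed point of order $3$, so any function transforming like a modular form of weight $k \not\equiv 0 \pmod 6$ for $\Gamma$ vanishes there; in particular $E_2^*(\rho) = E_4(\rho) = 0$ (the same vanishing already used in Proposition~\ref{proposition 1/j expansion}). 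Consequently all but the last monomial drop out and $R_2^2 E_2^*(\rho) = \frac{4\pi^2}{9} E_6(\rho)$.

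Finally I would insert the value of $E_6$ at the order-$3$ point, $E_6(\rho) = 24\sqrt{3}\,\Omega^6$, which can be read off from the table after Proposition~27 in Zagier's part of \cite{brdgza08}; it is also consistent with the evaluation $R_0^3 j(\rho) = -2^{16}\cdot 3^2\cdot\sqrt{3}\cdot\pi^3\Omega^6$ from Proposition~\ref{proposition 1/j expansion}, since every weight-$6$ almost holomorphic form has a value at $\rho$ which is a multiple of $E_6(\rho)$. Substituting $R_2^2 E_2^*(\rho) = \frac{4\pi^2}{9}\cdot 24\sqrt{3}\,\Omega^6$, together with $c_{1/j,\rho}(-3) = -\frac{\pi^{-3}\Omega^{-6}}{2^{12}\cdot 3^3}$ and $\imag(\rho) = \frac{\sqrt{3}}{2}$, into the displayed formula makes the transcendental factors $\pi^3$ and $\Omega^6$ cancel against those hidden in $c_{1/j,\rho}(-3)$, while $\sqrt{3}$ cancels against $\imag(\rho)^3$. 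The rational prefactors then collapse to $-\frac{1}{2^{11}\cdot 3^4} = -\frac{1}{165888}$.

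I expect the only genuine work to be the bookkeeping in the middle step: carrying the non-holomorphic $E_2^*$ and $y^{-1}$ contributions correctly through the two applications of the raising operator to reach the clean weight-$6$ expression, and pinning down the exact transcendental normalization of $E_6(\rho)$ so that all powers of $\pi$ and $\Omega$ cancel. Everything else is a finite explicit computation, and it is the vanishing $E_2^*(\rho) = E_4(\rho) = 0$ that makes both the sum over $n$ and the polynomial in $E_2^*, E_4, E_6$ collapse to a single surviving term.
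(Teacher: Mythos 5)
Your proposal is correct and follows the same route as the paper: both reduce the claim to Lemma~\ref{lemma average value evaluation} together with the principal part coefficient $c_{1/j,\rho}(-3)$ from Proposition~\ref{proposition 1/j expansion} and the single special value $R_2^2E_2^*(\rho)$, obtained by expressing the raised Eisenstein series as a polynomial in $E_2^*,E_4,E_6$ and using the vanishing $E_2^*(\rho)=E_4(\rho)=0$ plus the table of CM values. Your intermediate value $\frac{4\pi^2}{9}E_6(\rho)=\frac{4\pi^2}{9}\cdot 24\sqrt{3}\,\Omega^6$ agrees with the paper's stated $R_2^2E_2^*(\rho)=\frac{32}{\sqrt{3}}\pi^2\Omega^6$, and the final arithmetic checks out.
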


\begin{proof}
We have the special values
\[
E_{2}^{*}(\rho) = R_{2}E_{2}^{*}(\rho) = 0, \quad R_{2}^{2}E_{2}^{*}(\rho) = \frac{32}{\sqrt{3}}\pi^{2}\Omega^{6}.
\]
They can be obtained, for example, by writing $E_{2}^{*}, R_{2}E_{2}^{*}$ and $R_{2}^{2}E_{2}^{*}$ in terms of the Eisenstein series $E_{2}^{*},E_{4}$ and $E_{6}$ (using Proposition 15 in Zagier's part of \cite{brdgza08}) and then plugging in their values at $\rho$ from the table after Proposition 27 in loc. cit.
 Combining the above values with Lemma~\ref{lemma average value evaluation} and Proposition~\ref{proposition 1/j expansion} we obtain the value of $\tr_{1/j}(0)$.
%and
%\[
%R_{0}^{3}j(\rho) = -8i j'''(\rho) = -2^{16}\cdot9 \sqrt{3}\pi^{3}\cdot\Omega_{\Q(\sqrt{-3})}^{6} \sim -2195763.87100950
%\]
%The $j$ function has the elliptic expansion
%\[
%j(z) = \sum_{n=3}^{\infty}R_{0}^{n}j(\rho)\frac{\imag(\rho)^{n}}{n!}X_{\rho}^{n}(z)
%\]
%at $\rho$. In particular, the elliptic expansion of $1/j$ at $\rho$ is given by
%\[
%\frac{6}{\imag(\rho)^{3}R_{0}^{3}j(\rho)}X_{\rho}^{-3}(z) + \text{further terms}
%\]
%Thus we get
%\[
%\tr_{1/j}(0) = \frac{\pi}{3}\cdot\frac{1}{3}\cdot\frac{\imag(\rho)^{3}}{2}c_{1/j,\rho}(-3)R_{2}^{2}E_{2}^{*}(\rho) = -\frac{1}{2^{11}\cdot 3^{4}} = -\frac{1}{165888}.
%\]
\end{proof}

Next, we simplify the shadow of $\Phi_{\KM}(1/j,\tau)$. By Corollary~\ref{corollary shadow} and the elliptic expansion of $1/j$ from Proposition~\ref{proposition 1/j expansion} the image of $\Phi_{\KM}(1/j,\tau)$ under the lowering operator is given by
\[
-\frac{2\pi}{|\overline{\Gamma}_{\rho}|}\imag(\rho)^{3}c_{1/j,\rho}(-3)R_{2,z}^{2}\Theta_{\KM}^{*}(\rho,\tau).
\]
Computing the action of the raising operators we get
\begin{align*}
R_{2,z}^{2}\Theta_{\KM}^{*}(\rho,\tau) = \sum_{D \in \Z}\sum_{Q \in \mathcal{Q}_{D}}\frac{\overline{Q(\rho,1)}^{3}}{\imag(\rho)^{6}}\left(12\pi v^{3}Q_{\rho}-32\pi^{2}v^{4}Q_{\rho}^{3} \right)e^{-4\pi v \frac{|Q(\rho,1)|^{2}}{\imag(\rho)^{2}}}e^{-2\pi i D \tau}.
\end{align*}
If we write $Q = [A,B,C]$ and $\rho = \frac{1}{2}+i\frac{\sqrt{3}}{2}$ then we have the evaluations
\begin{align*}
Q_{\rho} = \frac{1}{\sqrt{3}}\left(2A+B+2C \right), \qquad \overline{Q(\rho,1)} = \frac{1}{2}(-A + B + 2C)-i\frac{\sqrt{3}}{2}(A+B).
\end{align*}
We set
\[
a = 2A+B+2C, \qquad b = -A + B + 2C, \qquad c = A+B.
\]
It is then not hard to see that with $A,B,C \in \Z$ the pairs $(a,b,c)$ run through the sublattice of $\Z^{3}$ defined by the conditions $a \equiv b \!\!\!\pmod 3$ and $b\equiv c \!\!\!\pmod 2$.
Thus we obtain
\begin{align*}
R_{2,z}^{2}\Theta_{\KM}^{*}(\rho,\tau) = \frac{8}{81\sqrt{3}}\sum_{\substack{a,b,c \in \Z \\ a \equiv b \!\!\!\pmod 3 \\ b\equiv c \!\!\! \pmod 2}}\left(36\pi v^{3}a-32\pi^{2}v^{4}a^{3}\right)\left(b-i\sqrt{3}c \right)^{3}e^{2\pi i a^{2}\tau/3}e^{-2\pi i (b^{2}/3+c^{2})\overline{\tau}}.
\end{align*}
Splitting the sum over $a$ into arithmetic progressions mod 3 and using the relation $\xi_{3/2} = v^{-1/2}\overline{L_{3/2}}$ we easily obtain the shadow given in Theorem~\ref{theorem main result}. 

Although we did not rigorously define what the holomorphic part of a real-analytic modular form should be, it seems reasonable from the Fourier expansion of $\Phi_{\KM}(1/j,\tau)$ to view the generating series of traces of reciprocal singular moduli as its holomorphic part. Since the image of $\Phi_{\KM}(1/j,\tau)$ under the $\xi$-operator is a linear combination of products of the functions $v^{7/2}\overline{\theta_{7/2,h}(\tau)}\theta_{4,h}(\tau)$, and $\theta_{7/2,h}$ is the completion of a mixed mock modular form of weight $7/2$ and depth $1$, see Example~\ref{example mock modular}, we may view the generating series of traces of reciprocal singular moduli as a mixed mock modular form of weight $3/2$ and depth $2$ by the definition given in Section~\ref{section mock modular forms}.
This finishes the proof of Theorem~\ref{theorem main result}.

\begin{remark}
	The above arguments work more generally for meromorphic modular forms $f \in \mathbb{S}_0$ having poles only at CM points in $\H$. In particular, one can generalize Theorem~\ref{theorem main result} to such $f$, that is, the generating series
	\[
	\sum_{D \leq 0}\tr_f(D)q^{-D}
	\]
	of traces of CM values of $f$ converges absolutely and locally uniformly, and defines a mixed mock modular form of weight $3/2$ and higher depth whose shadow is a linear combination of products of unary and binary theta functions. Indeed, the proof of Proposition~\ref{proposition polynomial growth} works in the same way if we replace $\rho$ by any other CM point, which shows that $\tr_f(D)$ is of polynomial growth in $|D|$.
	Furthermore, the above splitting of $R_{2,z}^2 \Theta_{\KM}^{*}(\rho,\tau)$ into a sum of products of unary and binary theta functions is a special case of a more general principle. In particular, a similar splitting exists for $R_{2,z}^{n-1} \Theta_{\KM}^{*}(z_0,\tau)$ for any CM point $z_0 \in \H$ and $n \geq 1$. First, one can show by induction that $R_{2,z}^{n-1}\Theta_{\KM}^{*}(z,\tau)$ is a non-zero constant multiple of
	\[
	v^{\frac{n}{2}+1}\sum_{D \in \Z}\sum_{Q \in \mathcal{Q}_D}\frac{\overline{Q(z,1)}^n}{y^{2n}}H_{n}\left(2\sqrt{\pi v}Q_z\right)e^{-4\pi v \frac{|Q(z,1)|^2}{y^2}}e^{-2\pi i D \tau},
	\] 
	where $H_n(x)$ denotes the $n$-th Hermite polynomial. If we plug in a CM point $z_0 \in\H$, the function $R_{2,z}^{n-1}\Theta_{\KM}^*(z_0,\tau)$ will split into a linear combination of products of complex conjugates of unary theta functions of weight $n+1/2$ (associated to the Hermite polynomial $H_n(x)$) and binary theta functions of weight $n+1$ (associated to harmonic homogeneous polynomials of degree $n$).
	
	 Finally, we remark that the splitting of $R_{2,z}^{n-1} \Theta_{\KM}^{*}(z_0,\tau)$ at a general CM point $z_0 \in \H$ can be described most conveniently in the vector-valued setup alluded to in Section~5 below. For an instance of the general principle, we refer the reader to \cite{ehlenduke}, where an analogous splitting of the Siegel theta function at an arbitrary CM point is worked out in the vector-valued setting.
\end{remark}

\section{The Fourier expansion of the Kudla-Millson theta lift of meromorphic modular forms for congruence subgroups}\label{section general result}

	As mentioned in Remark~\ref{remark general setup}, the results of this work can easily be generalized to arbitrary congruence subgroups by using a vector-valued setup as in \cite{bruinierfunketraces}. For the convenience of the reader we state the Fourier expansion of the Kudla-Millson theta lift of a meromorphic modular form in the general case. However, the necessary computations are analogous to the ones given in the proof of Theorem~\ref{theorem fourier expansion}, so we leave the details to the reader.
	
	Let $V$ be the real quadratic space of signature $(1,2)$ given by the set
	\[
	V = \left\{Q = \begin{pmatrix}b/2 & c \\ -a & -b/2\end{pmatrix}: a,b,c \in \R \right\},
	\]
	equipped with the quadratic form $q(Q) = \det(Q)$ and the associated bilinear form $(Q_1,Q_2) = -\tr(Q_1Q_2)$. We can also think of the elements of $V$ as binary quadratic forms $ax^2+bxy+cy^2$, with $q$ being $-\frac{1}{4}$ times the discriminant. The group $\SL_2(\R)$ acts on $V$ as isometries by conjugation, and this action is compatible with the usual action of $\SL_2(\R)$ on binary quadratic forms.
	
	Let $L \subset V$ be an even lattice with dual lattice $L'$, and let $\C[L'/L]$ be its group ring with standard basis $(\e_{h})_{h \in L'/L}$. We let $\Lambda$ be a congruence subgroup of $\SL_2(\Z)$ which acts on $L$ and fixes the classes of $L'/L$. For simplicity we assume that $-1 \in \Lambda$. For $h \in L'/L$ and $m \in \Q$ we let $L_{m,h}$ be the set of all $X \in L+h$ with $q(X) = m$, on which $\Lambda$ acts with finitely many orbits if $m \neq 0$. For $z = x+iy \in \H$ and $Q = \left(\begin{smallmatrix}b/2 &c \\ -a & -b/2\end{smallmatrix} \right)\in V$ we define the quantities
	\[
	Q(z) = az^2+bz+c, \qquad Q_z = \frac{1}{y}(a|z|^2+bx+c).
	\]
	%The Grassmannian of positive lines in $V$ can be identified with $\H$ by mapping $z = x+iy \in \H$ to the line spanned by $X(z) = \frac{1}{y}\left( \begin{smallmatrix}-x & |z|^2  \\ -1 & x\end{smallmatrix}\right)$. 
	For $Q \in L_{m,h}$ with $m > 0$ we let $z_Q \in \H$ denote the unique root of $Q(z)$ in $\H$. Given $h \in L'/L, m \in \Q_{>0}$, and a meromorphic modular form $f \in\mathbb{S}_0(\Lambda)$ of weight $0$ for $\Lambda$ which decays like a cusp form at all cusps, we define the trace function
	\[
	\tr_{f}(m,h) = \frac{1}{2}\sum_{Q \in \Lambda \backslash L_{m,h}}\frac{f(z_Q)}{|\overline{\Lambda}_{Q}|},
	\]
	where $f(z_Q)$ is again defined as the constant term in the elliptic expansion of $f(z)$ at $z = z_Q$ if $z_Q$ is a pole of $f$. Further, for $m = 0$ and $h\neq 0$ we set $\tr_f(m,h) = 0$, and we define $\tr_f(0,0)$ as the regularized average value of $f$ analogously as in Section~\ref{section traces}, but with the fundamental domain $\mathcal{F}^{*}$ for $\Gamma = \SL_2(\Z)$ replaced with a suitable fundamental domain $\mathcal{F}^{*}(\Lambda)$ for $\Lambda$. Finally, we set $\tr_f(m,h) = 0$ for $m < 0$. 
	
	The $\C[L'/L]$-valued Kudla-Millson theta function is defined by
	\[
	\Theta_{\KM}(z,\tau) = \sum_{h \in L'/L}\sum_{Q \in L+h}\left(vQ_z^2-\frac{1}{2\pi}\right)e^{-\pi v\frac{|Q(z)|^2}{y^2}}e^{2\pi i q(Q)\tau}\e_h, 
	\]
	and the Kudla-Millson theta lift of a meromorphic modular form $f \in \mathbb{S}_{0}(\Lambda)$ of weight $0$ for $\Lambda$ is defined analogously as in \eqref{eq KM lift definition}, but with the fundamental domain $\mathcal{F}^*$ replaced by $\mathcal{F}^*(\Lambda)$. The Kudla-Millson theta lift transforms like a modular form of weight $3/2$ for the Weil representation $\rho_L$. Furthermore, for fixed $\varrho \in \H$ and $h \in L'/L,m \in \Q,$ we define
	\[
	\widetilde{\Theta}_{\KM,m,h}^*(\varrho,\tau) = \sum_{\substack{Q \in L_{m,h}\setminus\{0\} \\ z_Q \neq \varrho}}\frac{Q_\varrho}{2\pi Q(\varrho)}e^{-\pi v \frac{|Q(\varrho)|^2}{\imag(\varrho)^2}},
	\]
	and we define its raised version similarly as in \eqref{eq theta preimage}.
	
	In this setting, the Fourier expansion of the Kudla-Millson theta lift is given follows.
	
	\begin{theorem}\label{theorem fourier expansion general}
	The Fourier expansion of the Kudla-Millson theta lift of $f \in \mathbb{S}_0(\Lambda)$ is given by
	\begin{align*}
	\Phi_{\KM}(f,\tau) &= \sum_{h \in L'/L}\sum_{m \in \Q}\bigg(2\tr_{f}(m,h) \\
	&\qquad\qquad-4\pi \sum_{\ell=1}^{r}\frac{1}{|\overline{\Lambda}_{\varrho_{\ell}}|}\sum_{n\geq 1}\frac{\imag(\varrho_{\ell})^{n}}{(n-1)!}c_{f,\varrho_{\ell}}(-n)R_{2,z}^{n-1}\widetilde{\Theta}_{\KM,m,h}^{*}(\varrho_{\ell},v)\bigg)q^{m}\e_h,
	\end{align*}
	where $c_{f,\varrho_\ell}(-n)$ are the coefficients of the elliptic expansions of $f$ around its poles $\varrho_1,\dots,\varrho_r$ mod $\Lambda$.
	\end{theorem}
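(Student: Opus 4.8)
The plan is to run the proof of Theorem~\ref{theorem fourier expansion} in the vector-valued setting, carrying along the components $\e_{h}$. First I would substitute the $\C[L'/L]$-valued Kudla-Millson theta function into the definition of the theta lift and interchange the sum over $Q$ with the regularized integral. This gives the Fourier expansion $\Phi_{\KM}(f,\tau) = \sum_{h \in L'/L}\sum_{m \in \Q}c(m,h,v)q^{m}\e_{h}$ with coefficients
\[
c(m,h,v) = \lim_{\varepsilon_{1},\dots,\varepsilon_{r} \to 0}\int_{\mathcal{F}^{*}(\Lambda)\setminus \bigcup_{\ell=1}^{r}B_{\varepsilon_{\ell}}(\varrho_{\ell})}f(z)\sum_{Q \in L_{m,h}}\left(vQ_{z}^{2}-\frac{1}{2\pi}\right)e^{-\pi v\frac{|Q(z)|^{2}}{y^{2}}}\frac{dxdy}{y^{2}},
\]
and it then remains to evaluate $c(m,h,v)$ for fixed $h$ in each of the three ranges $m>0$, $m=0$, and $m<0$.

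Next I would exploit the auxiliary function $\eta_{\KM}(Q,z,v) = \frac{Q_{z}}{2\pi Q(z)}e^{-\pi v |Q(z)|^{2}/y^{2}}$, which in this normalization satisfies $L_{2,z}\eta_{\KM}(Q,z,v) = (vQ_{z}^{2}-\frac{1}{2\pi})e^{-\pi v|Q(z)|^{2}/y^{2}}$ for $Q(z) \neq 0$, exactly as in \eqref{eq eta differential}. For $m<0$, and for $m=0$ after splitting off the term $Q=0$, the sum $\sum_{Q \in L_{m,h}}\eta_{\KM}(Q,z,v)$ is real-analytic on all of $\H$. Applying Stokes' theorem over $\mathcal{F}^{*}(\Lambda)$ with the $\varepsilon_{\ell}$-balls removed then collapses $c(m,h,v)$ into a sum of boundary integrals $\int_{\partial B_{\varepsilon_{\ell}}(\varrho_{\ell})}$, and the decomposition $B_{\varepsilon_{\ell}}(\varrho_{\ell}) = \bigcup_{\gamma \in \overline{\Lambda}_{\varrho_{\ell}}}\gamma(B_{\varepsilon_{\ell}}(\varrho_{\ell}) \cap \mathcal{F}^{*}(\Lambda))$ produces the factor $1/|\overline{\Lambda}_{\varrho_{\ell}}|$. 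Since Lemma~\ref{lemma residue theorem} is a purely local statement about elliptic expansions, it applies verbatim and evaluates each boundary integral in terms of $c_{f,\varrho_{\ell}}(-n)$ and $R_{2,z}^{n-1}\widetilde{\Theta}_{\KM,m,h}^{*}(\varrho_{\ell},v)$. The term $Q=0$, which occurs only for $m=0$ and $h=0$, is split off separately and, after multiplication by $-\frac{1}{2\pi}$, contributes $2\tr_{f}(0,0)$ by the definition of the regularized average value.

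For $m>0$ the CM points $z_{Q}$ enter, and here I would follow the case $D<0$ of Theorem~\ref{theorem fourier expansion}: cut out additional $\delta$-balls around the finitely many CM points of $L_{m,h}$ in $\mathcal{F}^{*}(\Lambda)$ before applying Stokes' theorem. Near $z_{Q}$ only the summand attached to $Q$ is singular, with singular part $\sgn(Q_{z})\sqrt{m}/(\pi Q(z))$ arising from the relation $Q_{z}^{2}=y^{-2}|Q(z)|^{2}+4q(Q)$ analogous to \eqref{eq Qz differential equations}. Plugging in the elliptic expansion of $f$ together with the factorization of $Q(z)$ as in \eqref{eq Qz1 elliptic variable} and applying the residue theorem, all terms drop out except the constant one, which yields $c_{f,z_{Q}}(0)/|\overline{\Lambda}_{z_{Q}}|$. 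Summing over orbit representatives and accounting for the correspondence $Q \leftrightarrow -Q$, which is a bijection between $L_{m,h}$ and $L_{m,-h}$ and is precisely what the factor $\tfrac{1}{2}$ in the definition of $\tr_{f}(m,h)$ compensates, produces the term $2\tr_{f}(m,h)$. The case where a pole $\varrho_{\ell}$ happens to be a CM point of $L_{m,h}$ is handled exactly as in \eqref{eq with poles}: one splits $\eta_{\KM}$ into its singular part and a real-analytic remainder, and since the raised remainder is a real-analytic multiple of $\overline{Q(z)}$ it vanishes at $\varrho_{\ell}$, so only the singular part contributes, again to the trace.

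The step I expect to be the main obstacle is the global bookkeeping at the cusps. In the scalar case there is a single cusp $\infty$, at which both $f$ (a cusp form) and $\Theta_{\KM}$ decay, so the only boundary contributions in Stokes' theorem come from the balls around poles and CM points. In the general setting one must verify that the vector-valued $\Theta_{\KM}$ still decays rapidly towards every cusp of $\Lambda$ so that the lift converges, and that all boundary integrals over the parts of $\partial\mathcal{F}^{*}(\Lambda)$ which are not cusp neighborhoods cancel in $\Lambda$-equivalent pairs by the modularity of the integrand. Together with the careful tracking of the $\pm Q \leftrightarrow \e_{\pm h}$ correspondence, this is where the genuinely new (though routine) work lies; every remaining step transfers from the proof of Theorem~\ref{theorem fourier expansion} essentially verbatim.
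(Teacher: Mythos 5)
Your proposal is correct and follows exactly the route the paper intends: the paper gives no separate proof of Theorem~\ref{theorem fourier expansion general}, stating only that the computations are analogous to those in the proof of Theorem~\ref{theorem fourier expansion}, and your sketch is precisely that transfer, with the normalizations ($e^{-\pi v|Q(z)|^2/y^2}$, the singular part $\sgn(Q_z)\sqrt{m}/(\pi Q(z))$, the factor $\tfrac12$ in $\tr_f(m,h)$ absorbing the $\pm Q$ double count) correctly adjusted. The one point you flag as the main obstacle, the square-exponential decay of the vector-valued Kudla-Millson kernel at every cusp of $\Lambda$, is indeed the only genuinely new input and is already established in \cite{bruinierfunketraces}.
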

	
	Finally, consider the lattice $L$ given by the set of all integral traceless $2$ by $2$ matrices, and choose $\Lambda = \SL_2(\Z)$. Then $L_{m,h}$ can be identified with the set $\mathcal{Q}_{-4m}$ of all integral binary quadratic forms of discriminant $-4m$. For $m > 0$ the point $z_Q \in \H$ associated to $Q\in L_{m,h}$ is precisely the CM point associated to the binary quadratic form corresponding to $Q$. The discriminant group of $L$ is isomorphic to $\Z/2\Z$. By the results of \cite{ez}, Section 5, we can identify vector-valued modular forms for the Weil representation $\rho_L$ with scalar-valued modular forms satisfying the Kohnen plus space condition via the map $f_0(\tau) \e_0 + f_1(\tau) \e_1 \mapsto f_0(4\tau)+f_1(4\tau)$. In this way Theorem~\ref{theorem fourier expansion general} generalizes Theorem~\ref{theorem fourier expansion}.

\bibliographystyle{alpha}
\bibliography{references.bib}

\end{document}